\newtheorem{theorem}{Theorem}
\newtheorem{lemma}{Lemma}
\newtheorem{corollary}{Corollary}
\theoremstyle{definition}
\newtheorem*{definition}{Definition}
\newtheorem*{definitions}{Definitions}
\newtheorem*{remark}{Remark}
\newcommand{\R}{\mathbb{R}}
\DeclareMathOperator{\Equal}{Eq}
\DeclareMathOperator{\diam}{diam}
\newcommand{\G}{\mathscr{G}}
\newcommand{\N}{\mathbb{N}}
\newcommand{\Al}{\mathscr{A}}
\newcommand{\Bl}{\mathscr{B}}
\newcommand{\empw}{\varepsilon}
\newcommand{\sh}{d_{\text{Sh}}}
\newcommand{\ad}{{d}}
\DeclareMathOperator{\lang}{\mathcal{L}}
\DeclareMathOperator{\fr}{M}
\DeclareMathOperator{\Fr}{Fr}
\DeclareMathOperator{\dind}{\fr}
\newcommand{\adsymbol}{\Fr}
\newcommand{\ld}{\underline{d}}
\newcommand{\ud}{\overline{d}}
\newcommand{\F}{\mathscr{F}}
\DeclareMathOperator{\Ind}{\mathscr{I}}
\newcommand{\eps}{\varepsilon}
\begin{document}
\title[Entropy, chaos, and independence]{Two results on entropy, chaos, and independence in symbolic dynamics}
\markright{Entropy, chaos, and independence in symbolic dynamics}
\author{Fryderyk Falniowski}
\address[F. Falniowski]{Department of Mathematics, Cracow University of Economics,
Rakowicka~27, 31-510 Krak\'ow, Poland}
\email{fryderyk.falniowski@uek.krakow.pl}

\author{Marcin Kulczycki}
\address[M. Kulczycki]{Institute of Mathematics, Faculty of Mathematics and Computer Science, Jagiellonian University in Krak\'{o}w, ul. {\L}ojasiewicza 6, 30-348 Krak\'{o}w, Poland}
\email{Marcin.Kulczycki@im.uj.edu.pl}
\urladdr{http://www.im.uj.edu.pl/MarcinKulczycki} 

\author{Dominik Kwietniak}
\address[D. Kwietniak]{Institute of Mathematics, Faculty of Mathematics and Computer Science, Jagiellonian University in Krak\'{o}w, ul. {\L}ojasiewicza 6, 30-348 Krak\'{o}w, Poland}
\email{dominik.kwietniak@uj.edu.pl}
\urladdr{http://www.im.uj.edu.pl/DominikKwietniak} 

\author{Jian Li}
\address[J. Li]{Department of Mathematics,
Shantou University, Shantou, Guangdong 515063, P.R. China}
\email{lijian09@mail.ustc.edu.cn}

\begin{abstract}
We survey the connections between entropy, chaos, and independence in topological dynamics.
We present extensions of two classical results placing the following notions in the context of symbolic dynamics:
\begin{enumerate}
\item Equivalence of positive entropy and the existence of a large (in terms of asymptotic and Shnirelman densities) set of combinatorial independence for shift spaces.
\item Existence of a mixing shift space with a dense set of periodic points with topological entropy zero and without ergodic measure with full support, nor any distributionally chaotic pair.
\end{enumerate}
Our proofs are new and yield conclusions stronger than what was known before.
\end{abstract}
\keywords{topological entropy, detrministic dynamical system, independence, Devaney chaos, Li-Yorke chaos, distributional chaos, $\omega$-chaos}
\subjclass[2010]{37B40, 37B10}
\maketitle

\section{Introduction.}

Furstenberg \cite[p. 38]{Fur67} calls a dynamical system \emph{deterministic} if its \emph{topological entropy} vanishes.
One may argue that the future of a deterministic dynamical system can be predicted if its past is known (see \cite[Chapter 7]{Weiss}).
In a similar way \emph{positive entropy} may be related to \emph{randomness} and \emph{chaos}.

This article surveys the relations between entropy, independence, and chaos, adding a new twist to two classical results.

The first result describes in what sense positive entropy may be understood as independence. It turns out that for a dynamical system there is a deep connection between positive entropy and randomness defined through the notion of the \emph{combinatorial independence}. General results of this kind can be found in \cite{KerrLi}. A special instance of this result (see \cite[Theorem 8.1]{Weiss}) says that a symbolic dynamical system (a shift space) $X\subset\{0,1\}^\N$ has positive entropy if and only if there is a large set of indices along which points from $X$ behave like $\{0,1\}$-valued independent random variables. We strengthen this theorem by proving that positive topological entropy of a shift space is equivalent to the existence of an independence set for which asymptotic \textbf{and} Shnirelman densities are equal and positive. This shows that, for a shift space with positive entropy, one can find an independence set which is large and structured. One can easily adapt our reasoning to the general context.

Before we describe our second result, let us consider the following properties that a dynamical system $(X,T)$ may have.
\begin{enumerate}
\item \label{ERG} There exists an ergodic $T$-invariant Borel probability measure on $X$ with full support (that is, an invariant measure positive on all nonempty open subsets of $X$).
\item \label{TRANS} The system $(X,T)$ is \emph{topologically transitive}, that is, for any pair of nonempty open sets
$U, V \subset X$ there exists a positive integer $n$ with $T^{-n}(U)\cap V\neq \emptyset$.
\item \label{FULL} There exists a $T$-invariant (not necessarily ergodic) Borel probability measure on $X$ with full support.
\end{enumerate}
It is straightforward that \eqref{ERG} implies \eqref{TRANS} and, trivially, \eqref{FULL}. It is also relatively easy to prove that \eqref{TRANS} does not imply \eqref{FULL} and hence neither implies \eqref{ERG}. According to \cite{Weiss73}, H.~Keynes asked whether \eqref{TRANS} and \eqref{FULL} together imply \eqref{ERG}. B.~Weiss answered this question negatively in \cite{Weiss73}. He constructed a transitive shift space with a dense set of periodic points and no ergodic invariant measure of full support. He also noted, omitting the proof, that his example is in fact topologically mixing and has zero topological entropy. Furthermore, Weiss conjectured \cite[Remark 1]{Weiss73} that the only ergodic measures for his system are those concentrated on the orbit of a single periodic point. Note that Huang and Ye constructed a uniformly positive entropy system without an ergodic invariant measure of full support (see \cite[Theorem 9.6]{HY06}).

We provide an alternative construction of a similar example. Our method allows us to prove  that the system we have defined possesses all the properties proved or conjectured about the example from \cite{Weiss73}. Furthermore, we show that our example does not possess any distributionally chaotic pairs. This extends a result by Oprocha \cite{Oprocha06} who sketched a construction of a Devaney chaotic system without any DC$1$ distributionally chaotic pairs.
In addition, we show that our example is $\omega$-chaotic, but not $\omega^*$-chaotic. To do this we provide a new method of constructing $\omega$-chaotic sets.

\section{Notation and definitions.}

Let $\N$ denote the set of \emph{positive} integers.
We denote the number of elements of a finite set $A$ by $|A|$. 
Given any real number $x$, we write  $\lfloor x\rfloor$ for the largest integer not greater than $x$. A sequence of real numbers $\{a_n\}_{n=1}^\infty$ is \emph{subadditive} if $a_{m+n}\leq a_m+a_n$ for all $m,n\in\N$.
We recall that Fekete's Lemma states that if a sequence $\{a_n\}_{n=1}^\infty$ of nonnegative numbers is subadditive, then the sequence $\{\frac{1}{n}a_n\}_{n=1}^\infty$ converges to a limit equal to the infimum of the terms of this sequence.

A \emph{(topological) dynamical system} is a pair $(X,T)$,
where $X$ is a compact metric space and $T\colon X\to X$ is a continuous map.
By $\rho$ we denote a compatible metric for $X$. A dynamical system $(X,T)$ is \emph{transitive} if for any nonempty open sets $U,V\subset X$ there is $n\in\N$
such that $T^{-n}(V)\cap U\neq\emptyset$. We say that $(X,T)$ is \emph{topologically mixing} if for any nonempty open sets $U,V\subset X$ there is $N\in\N$
such that $T^{-n}(V)\cap U\neq\emptyset$ for all $n\ge N$.
The set of limit points of the sequence $\{T^n(x)\}_{n\in\mathbb{N}}$ is called the \emph{$\omega$-limit set} of $x$ and denoted by $\omega_T(x)$. A set $M\subset X$ is called \emph{minimal} if it is nonempty, closed, invariant, and contains no proper subset with these properties.

We assume the reader is familiar with the basic notions of topological dynamics and ergodic theory (see \cite{Walters}). In particular, we assume that the reader knows the definition of topological entropy.

\section{Definitions of chaos and their relations with topological entropy}
In this section we collect some definitions of chaos and survey connections between them and topological entropy.

\subsection{Li-Yorke chaos}
Let $(X,T)$ be a dynamical system and $(x,y)\in X\times X$.
We say that $(x,y)$ is a \emph{Li-Yorke pair}\footnote{Such pairs appeared in the seminal paper of Li and Yorke \cite{LY}.} if
\begin{align*}
\liminf_{n\to\infty}\rho(T^n(x),T^n(y))&=0,\\
\limsup_{n\to\infty}\rho(T^n(x),T^n(y))&>0.
\end{align*}
A dynamical system $(X,T)$ is \emph{Li-Yorke chaotic} if there is an uncountable set $S\subset X$ such that every pair $(x,y)$ with $x,y\in S$ and $x\neq y$ is a Li-Yorke pair. A nice survey of properties of Li-Yorke chaotic systems can be found in \cite{BHS}.

\subsection{Distributional chaos}
Given $x,y\in X$ and $t\in\R$ we define an \emph{upper} and \emph{lower distribution} function
by
\begin{align*}
  F_{xy}(t) &= \liminf_{n\to\infty}\frac{1}{n}\left| \left\{0\le j \le n-1 : \rho(T^j(x),T^j(y))<t\right\}\right|,\\
F^*_{xy}(t) &= \limsup_{n\to\infty}\frac{1}{n}\left| \left\{0\le j \le n-1 : \rho(T^j(x),T^j(y))<t\right\}\right|.
\end{align*}
Clearly, $t\mapsto F_{xy}(t)$ and $t\mapsto F^*_{xy}(t)$ are nondecreasing, $0\le F_{xy}(t)\le F^*_{xy}(t)\le 1$ for all $t\in\R$,  $F_{xy}(t)= F^*_{xy}(t)=0$ for all $t\le 0$, and $F_{xy}(t)= F^*_{xy}(t)=1$ for all $t>\diam X$.
Distributional chaos was introduced in \cite{SS}.
Following \cite{BSS} we say that a pair $(x,y)\in X\times X$  is a DC$1$-\emph{pair} if
$F^*_{xy}(t)=1$ for all $t>0$, and $F_{xy}(s)=0$ for some $s>0$.
A pair $(x,y)$ is a DC$2$-\emph{pair} if
$F^*_{xy}(t)=1$ for all $t>0$, and $F_{xy}(s)<1$ for some $s>0$.
Finally, a DC$3$-\emph{pair} is a pair $(x,y)\in X\times X$ such that
$F_{xy}(t) < F^*_{xy}(t)$ for all $t$ in some interval of positive length.

Let $i\in\{1,2,3\}$. The dynamical system $(X,T)$ is \emph{distributionally chaotic of type $i$}
(or DC\emph{$i$-chaotic} for short), if there is an uncountable
set $S\subset X$ such that any pair of distinct points from $S$ is a DC\emph{i}-pair.

\subsection{Devaney chaos} Devaney calls a dynamical system $(X,T)$  \emph{chaotic}
if $T$ is transitive, $T$-periodic points are dense in $X$, and $T$ has sensitive dependence on
initial conditions (see \cite{Devaney}). It turns out that if $X$ is infinite, then the sensitive dependence
follows from the other two conditions (see \cite{BBCDS,GW93}). Since we restrict our attention to
compact metric spaces without isolated points we say that $(X,T)$ is
Devaney chaotic if $T$ is transitive and $T$-periodic points are dense in $X$. Such systems are also known as
\emph{$P$-systems} (see \cite{GW93}).
\subsection{$\omega$-chaos}
We say that a dynamical system $(X,T)$ is \emph{$\omega$-chaotic} if there exists an uncountable set $S\subset X$ such that for any $x,y\in S$ with $x\neq y$ we have
\begin{itemize}
\item $\omega_T(x)\backslash \omega_T(y)$ is uncountable,
\item $\omega_T(x)\cap\omega_T(y)$ is nonempty,
\item $\omega_T(x)$ is not contained in the set of periodic points.
\end{itemize}
Li \cite{Li} introduced $\omega$-chaos and proved that it is equivalent to positive topological entropy for interval maps. In \cite{LiETDS} Li defined a variant of $\omega$-chaos, called \emph{$\omega^*$-chaos}.
A dynamical system $(X,T)$ is \emph{$\omega^*$-chaotic} if there exists an uncountable set $S\subset X$ such that for any $x,y\in S$ with $x\neq y$ we have
\begin{itemize}
\item $\omega_T(x)\backslash \omega_T(y)$ contains an infinite minimal set,
\item $\omega_T(x)\cap\omega_T(y)$ is nonempty.
\end{itemize}
Since an infinite minimal set is uncountable and does not contain a periodic point, $\omega^*$ implies $\omega$-chaos. All examples of $\omega$-chaotic maps known to us are in fact $\omega^*$-chaotic. As is common for various notions of chaos in a general setting of an arbitrary compact metric space, $\omega$-chaos is independent of most other notions of chaos.

\subsection{Entropy vs. chaos}

There is no connection between positive topological entropy and Devaney chaos. It is known that Devaney chaos implies Li-Yorke chaos \cite{HY02} and does not imply distributional chaos \cite{Oprocha06}. The example in \cite{Weiss73} shows that Devaney chaos (hence Li-Yorke chaos as well by \cite{HY02}) does not imply positive entropy. In general, positive entropy neither implies topological transitivity nor existence of periodic points.  Nevertheless,
Li \cite{Li} proved that for maps on the interval positive topological entropy is equivalent to the existence of a subsystem chaotic in the sense of Devaney. The question whether positive topological entropy implies
Li-Yorke chaos or DC$2$ distributional chaos remained open for some time, but eventually both implications turned out to be true; see \cite{BGKM} and \cite{Downarowicz}, respectively. The consequences of positive topological entropy for dynamics of pairs and tuples were also examined in \cite{BH,HLY}.
Distributional chaos of type $2$ implies Li-Yorke chaos by definition. The converse implication is not true, because if $X=[0,1]$, then by \cite{SS} distributional chaos DC$3$ is equivalent to the positive topological entropy, and there are Li-Yorke chaotic interval maps with zero topological entropy (as was shown independently by Sm\'{\i}tal \cite{Smital} and Xiong \cite{Xiong}). Piku{\l}a proved that there is no connection between positive topological entropy and DC$1$ distributional chaos \cite{Pikula}, therefore Li-Yorke chaos doesn't imply positive topological entropy, either. Piku{\l}a \cite{Pikula} also constructed an example of $\omega$-chaotic dynamical system without Li-Yorke pairs. Since there exist minimal systems with positive topological entropy, none of the following properties: positive topological entropy, distributional chaos, Li-Yorke chaos imply $\omega$-chaos (just note that if $(X,T)$ is minimal, then $\omega_T(x)=X$ for every $x\in X$). Downarowicz and Ye \cite{DY} constructed a transitive dynamical system $(X,T)$ which is Devaney chaotic and every point $x\in X$ is either transitive ($\omega_T(x)=X$) or periodic. Such a system cannot be $\omega$-chaotic.

All above is only a glimpse of the vast literature of the subject. For deeper discussion of these matters we refer the reader to the excellent surveys
by Blanchard \cite{Blanchard}, Glasner and Ye \cite{GY}, Li and Ye \cite{LY14}.

\section{Symbolic dynamics}

With regard to symbolic dynamics we follow the notation and terminology of Lind and Marcus~\cite{LM95} as closely as possible, except that we consider only one-sided shifts.

\begin{definitions}[Full shifts]
Let $\Al$ be a nonempty finite set. We call $\Al$ the \emph{alphabet} and elements of $\Al$ are \emph{symbols}.
The \emph{full $\Al$-shift} is the set \[\Al^\N=\{x=\{x_i\}_{i=1}^\infty : x_i\in\Al\text{ for all } i\in\N\}.\]
We equip $\Al$ with the discrete topology and $\Al^\N$ with the product topology. We usually write an element of $\Al^\N$ as $x=\{x_i\}_{i=1}^\infty=x_1x_2x_3\ldots$. Then $\Al^\N$ is a compact topological space
and the formula
\begin{equation}\label{eq:rho}
\rho(x,y)=\begin{cases}
0,&\text{if }x=y,\\
2^{-\min\{j\in\N:x_j\neq y_j\}}, &\text{if }x\neq y,
\end{cases}
\end{equation}
defines a metric on $\Al^\N$ which is compatible with the product topology.
The \emph{shift map} $\sigma\colon\Al^\N\rightarrow\Al^\N$ is a~continuous transformation given by
\[
x=\{x_i\}_{i=1}^\infty\mapsto \sigma(x)=\{x_{i+1}\}_{i=1}^\infty.
\]
That is, $\sigma(x)$ is the sequence obtained by dropping the first symbol of $x$. A \emph{full $r$-shift} is the full shift over the alphabet $\{0,1,\ldots,r-1\}$ and a \emph{full binary shift} is the full $2$-shift.
\end{definitions}
\begin{definitions}[Blocks]
A \emph{block over $\Al$} is a finite sequence of symbols and its \emph{length} is the number of its symbols. An \emph{$n$-block} stands for a block of length $n$. The \emph{empty block}, denoted by $\empw$, is the unique block with no symbols and length zero. The set of all blocks over $\Al$ (including $\empw$) is denoted by $\Al^*$. The \emph{concatenation} of two blocks $u=a_1\ldots a_k$ and $v=b_1\ldots b_l$ is the block $uv=a_1\ldots a_kb_1\ldots b_l$. We write $u^n$ for the concatenation of $n\ge 1$ copies of a block $u$ and $u^\infty$ for the sequence $uuu\ldots\in\Al^\N$.

By $x_{[i,j]}$ we denote the block $x_ix_{i+1}\ldots x_j$, where $1\le i \le j$ and $x=(x_i)_{i=1}^\infty\in\Al^\N$. We say that a block $w\in A^*$ \emph{occurs in $x$} and $x$ \emph{contains} $w$ if $w=x_{[i,j]}$ for some integers $1\le i \le j$. Note that $\empw$ occurs in every sequence from $\Al^\N$. Similarly, given an $n$-block $w=w_1\ldots w_n \in\Al^*$ we define $w_{[i,j]}=w_iw_{i+1}\ldots w_j\in\Al^*$ for each $1\le i\le j\le n$.
A \emph{prefix} of a block $z\in\Al^*$ is any $u$ such that $z=uv$ for some $v\in\Al^*$.
\end{definitions}

\begin{definitions}[Shift spaces]
Given any collection $\F$ of blocks over $\Al$ (i.e., a~subset of $\Al^*$) we define a \emph{shift space specified by $\F$}, denoted by $X_\F$, as the set of all sequences from $\Al^\N$ which do not contain any blocks from $\F$. We say that $\F$ is a collection of \emph{forbidden blocks for $X_\F$}.

A~\emph{shift space} is a set $X\subset\Al^\N$ such that $X=X_\F$ for some $\F\subset\Al^*$. A~\emph{binary shift space} is a shift space over the alphabet $\{0,1\}$. Equivalently, $X\subset\Al^\N$ is a shift space if it is a closed and $\sigma$-invariant subset of $\subset\Al^\N$.
\end{definitions}

\begin{definition}[Language of a shift space]
The \emph{language} of a shift space $X$ over $\Al$ is the set of blocks over $\Al$ which do occur in some sequence $x\in X$. We denote it by $\Bl(X)$. We write $\Bl_n(X)$ for the set of all $n$-blocks contained in $\Bl(X)$. Similarly, $\Bl(x)\subset \Al^*$ is the collection of all blocks occurring in $x\in\Al^\N$. The language of a shift space determines the shift space: $x\in\Al^\N$ belongs to a shift space $X$ if and only if for every $k\in\N$ the initial block $x_{[1,k]}$ is in $\Bl(X)$.
\end{definition}

A \emph{cylinder} of an $n$-word $w\in \Al^*$ in a shift space $X$ is the set
\[
[w]_X=\{x\in X: x_{[1,n]}=w\}.
\]
If a space $X$ is clear from the context, we call $[w]_X$ the cylinder of $w$. The collection of all cylinders
$\{[w]_X:w\in\Bl(X)\}$ is a basis of the topology of $X$.
If $\mathcal{L}$ is a language of some shift space over $\Al$, then $\mathcal{L}$ is
\emph{factorial}, meaning that if $u\in\mathcal{L}$ and $u=vw$ for some blocks $v,w\in \Al^*$, then both $v$ and $w$ also belong to $\mathcal{L}$, and
\emph{prolongable}, meaning that for every block $u$ in $\mathcal{L}$ there is a~symbol $a\in \Al$ such that $ua$ also belongs to $\mathcal{L}$.

Conversely, every factorial and prolongable subset $\mathcal{L}\subset\Al^*$ determines a shift space $X_\mathcal{L}$ such that $\mathcal{L}$ is the language of $X$ (see \cite[Proposition 1.3.4]{LM95}). A collection of forbidden blocks defining $X$ is $\F=\Al^*\setminus\mathcal{L}$. A point $x\in \Al^\N$ is in $X_\mathcal{L}$ if and only if $x_{[i,j]}\in\mathcal{L}$ for all $i,j\in\N$ with $ i < j$.

We will use superscripts in brackets to denote indices for sequences of blocks in $\Al^*$ or points in $\Al^\N$.
That is, we write $\{w^{(n)}\}_{n=1}^\infty$ for a sequence of blocks and we use subscripts for enumerating symbols in the block: $w^{(n)}=w^{(n)}_1w^{(n)}_2\ldots w^{(n)}_k$.


In its full generality, the concept of entropy was defined by Adler, Konheim and McAndrew \cite{AKM} for
any continuous map $T\colon X\to X$ on an arbitrary compact topological space $X$. The definition below, specific to symbolic dynamics, is equivalent to the general one.

\begin{definition}[Entropy]
Let $X\subset\Al^\N$ be a nonempty shift space and let $m,n\in\N$. Observe that $|\Bl_{m+n}(X)|\le|\Bl_{m}(X)|\cdot |\Bl_{n}(X)|$, and hence
\[
\log |\Bl_{m+n}(X)|\le \log|\Bl_{m}(X)|+\log |\Bl_{n}(X)|.
\]
Using Fekete's lemma we define the \emph{entropy of $X$}, denoted by $h(X)$, as
\[
h(X)=\lim_{n\to\infty} \frac{1}{n}\log |\Bl_n(X)|=\inf_{n\ge 1} \frac{1}{n}\log |\Bl_n(X) |.
\]
\end{definition}

\section{Entropy and independence in symbolic dynamics}
Independence is certainly among the most popular terms used to single out an interesting mathematical phenomenon.
Consider the full shift over $\{0,1\}$. Clearly, the full shift is as random as possible --- it is a model containing all possible outcomes of an infinite sequence of fair coin tosses (we set $x_i=1$ if the coin turns heads on the $i$-th toss, and $x_i=0$ otherwise). Anything that can happen is encoded in some point $x$ --- more precisely, for any $J\subset\N$ and any assignment $\varphi\colon J\to \Al$ of $0$'s and $1$'s to elements of $J$ there is a point $x$ in the full shift realizing this assignment, that is, $x_i=\varphi(i)$ for all $i\in J$. For a general shift space $X$ over $\{0,1\}$ the above procedure may not work for every set $J\subset\N$.
But one may still consider the following problem: let $X$ be a shift space over $\Al$. Assume that someone picks a set $J\subset\N$ and for each $i\in J$ chooses a symbol $\varphi(i)\in\Al$ (the choice may be random). Does there exist a point $x\in X$ whose $i$-th coordinate is $\varphi(i)$ for every $i\in J$? If the answer is positive for every assignment $\varphi$, then we say that $X$ is \emph{independent over $J$}. Now we may ``measure'' the randomness of $X$ by the size of the ``largest'' independent set $J\subset\N$.

\begin{definition}[Independence set for a shift space]
We say that a set $J\subset\N$ is an \emph{independence set} for a shift space $X\subset\Al^\N$ if for every function $\varphi\colon J\to \Al$ there is
a point $x=\{x_j\}_{j=1}^\infty\in X$ such that $x_i=\varphi(i)$ for every $i\in J$.
\end{definition}

Note that if $J\subset\N$ is an independence set for a shift space $X$, then so is every subset of $J$.




\subsection{Asymptotic density and Shnirelman density}

The most natural way to describe the size of an infinite subset of $\N$ is the \emph{asymptotic density}.
It is translation invariant and it is invariant under the exclusion or inclusion of
finitely many elements. A similar notion is the \emph{Shnirelman density}, which formally is also a limit,  but
it gives information about the structure of $A\cap\{1,\ldots,n\}$ for every $n\in\N$.
Although it seems to be less natural a~concept than asymptotic density, Shnirelman's notion proved invaluable to his approach to the Goldbach problem.

\begin{definitions}[Densities]
A set $A\subset\N$ has \emph{asymptotic density $\alpha$} if the limit
\[
d(A)=\lim_{n\to\infty}\frac{|A\cap\{1,2,\ldots,n\}|}{n}
\]
exists and is equal to $\alpha$.

The \emph{Shnirelman density} $\sh(A)$ of a set $A\subset \N$ is defined by
\[
\sh(A)=\inf \bigg\{\frac{|A\cap\{1,2,\ldots,n\}|}{n}:n\in\N\bigg\}.
\]
\end{definitions}

\subsection{Positive entropy is equivalent to independence}
Both notions measure how dense a set is, and a set with any of these densities positive may be called large.
We will prove that positive topological entropy is equivalent to existence of an independence set for which asymptotic \textbf{and} Shnirelman densities are equal and positive. This shows that for a shift space with positive entropy one can find an independence set which is large and structured
(there exists $0<\alpha\le 1$ such that our set occupies at least $\alpha$ proportion of $\{1,\ldots,n\}$ for \emph{every} $n\in\N$). Note that positive asymptotic density of a set $A$ tells us only that $|A\cap \{1,\ldots,n\}|$ behaves like $\alpha n$ for some $\alpha>0$ and $n$ \emph{large enough}.

\begin{theorem}\label{main}
Let $X$ be a binary shift. Then the entropy of $X$ is positive if and only if $X$ is independent over a set whose asymptotic density exists, is positive, and is equal to its Shnirelman density.
\end{theorem}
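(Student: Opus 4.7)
The $(\Leftarrow)$ direction is immediate: if $J\subset\N$ is independent for $X$ with $d(J)=\sh(J)=\alpha>0$, then for every $n$ the restriction map $\Bl_n(X)\to\{0,1\}^{J\cap[1,n]}$ is surjective, so $|\Bl_n(X)|\geq 2^{|J\cap[1,n]|}\geq 2^{\alpha n}$, giving $h(X)\geq\alpha\log 2$ upon taking the infimum over $n$.

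For $(\Rightarrow)$, assume $h(X)>0$. Fekete's lemma applied to the subadditive sequence $\log|\Bl_n(X)|$ gives $|\Bl_n(X)|\geq e^{hn}$ for every $n$. Applying the Sauer--Shelah lemma to $\Bl_n(X)\subset\{0,1\}^{[1,n]}$ produces, for each $n$, a \emph{shattered} set $I_n\subset[1,n]$ with $|I_n|\geq\alpha n$ for some fixed $\alpha=\alpha(h)>0$; here shattered means every $\varphi\colon I_n\to\{0,1\}$ is the restriction of some block in $\Bl_n(X)$, hence by prolongability of some point of $X$, so $I_n$ is an $X$-independence set. I would record three closure properties of the family of $X$-independence subsets of $\N$: it is hereditary, closed under pointwise limits in $\{0,1\}^\N$ (by compactness of $X$), and, thanks to $\sigma$-invariance of $X$, closed under the translations $J\mapsto(J-k)\cap\N$ for each $k\geq 0$ (given $\varphi$ on the translate, extend to $\tilde\varphi$ on $J$, realize $\tilde\varphi$ by some $x\in X$, and pass to $\sigma^k(x)\in X$).

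Next I would convert the uniform lower bound $|I_n|\geq\alpha n$ into large Shnirelman density via a Dilworth-style rearrangement. Fix $\epsilon>0$ and set $s_j:=|I_n\cap[1,j]|-(\alpha-\epsilon)j$; then $s_0=0$ and $s_n\geq\epsilon n$, so at a minimizer $k_n^*$ one has $s_{k_n^*}\leq 0$, $n-k_n^*\geq\epsilon n$, and $|I_n\cap[k_n^*+1,k_n^*+m]|\geq(\alpha-\epsilon)m$ for every $m\leq n-k_n^*$. By the translation closure above, the shifted set $J_n:=(I_n-k_n^*)\cap[1,n-k_n^*]$ is an $X$-independence set all of whose initial segments have density at least $\alpha-\epsilon$. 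Taking a subsequential limit of $\mathbf{1}_{J_n}$ in the compact space $\{0,1\}^\N$ and diagonalizing over a sequence $\epsilon_k\downarrow 0$ yields an infinite $X$-independence set $J^\dagger$ with $\sh(J^\dagger)\geq\alpha$.

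The final step, which I expect to be the principal obstacle, is to upgrade $J^\dagger$ to a set whose asymptotic density exists and coincides with its Shnirelman density. My plan is to pass to the orbit closure $Y:=\overline{\{\sigma^k\mathbf{1}_{J^\dagger}:k\geq 0\}}\subset\{0,1\}^\N$, select an ergodic $\sigma$-invariant measure $\mu$ on $Y$ with $\mu([1])>0$ (which exists because $\mathbf{1}_{J^\dagger}$ has positive lower Cesàro average on the coordinate $[1]$), restrict to a uniquely ergodic minimal subsystem $Z\subset\operatorname{supp}\mu$ (adjusting via a Jewett--Krieger-type argument if necessary) with unique invariant measure $\nu$ satisfying $\nu([1])>0$, and pick a point $\mathbf{1}_J\in Z$. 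Every point of $Y$ is a pointwise limit of shifts of $\mathbf{1}_{J^\dagger}$, so by the closure properties $J$ is again an $X$-independence set. Unique ergodicity forces $|J\cap[1,n]|/n$ to converge \emph{uniformly} to $\nu([1])$; combined with a supplementary compactness argument within $Z$ (for each $\varepsilon$ pick a point whose initial partial averages already lie above $\nu([1])-\varepsilon$, then let $\varepsilon\downarrow 0$) this produces $J$ with $d(J)=\sh(J)=\nu([1])>0$. Reconciling the limiting character of $d(J)$ with the pointwise Shnirelman condition is the delicate part of the argument.
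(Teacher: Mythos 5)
Your $(\Leftarrow)$ direction, the Sauer--Shelah step, the closure properties of the family of independence sets, and the rearrangement producing an independence set $J^\dagger$ with $\sh(J^\dagger)\ge\alpha>0$ are all correct, and they closely parallel the paper (which packages the independence sets into an auxiliary shift space $I_X$ and invokes a general density theorem for it). The gap is in your final step. A minimal subsystem $Z$ of $\operatorname{supp}\mu$ need not be uniquely ergodic, and --- more fatally --- need not meet the cylinder $[1]$ at all: even when $\mu$ is ergodic with $\mu([1])>0$, every minimal subset of $\operatorname{supp}\mu$ can be the fixed point $\{0^\infty\}$ (arrange the language of $Y$ so that every word of length $2^k$ contains a run $0^k$; then no point with syndetic occurrences of $1$ survives, yet the density of $1$'s can remain $1/2$). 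So there may be no candidate $Z$ with $\nu([1])>0$. The appeal to Jewett--Krieger does not rescue this: it produces an \emph{abstract} uniquely ergodic model, measure-theoretically isomorphic but not topologically embedded in $Y$, so its points are no longer characteristic functions of independence sets of $X$ and the closure properties you established do not apply to them. Even granting a suitable $Z$, the ``supplementary compactness argument'' that some point of $Z$ has all initial averages at least $\nu([1])$ is exactly the nontrivial combinatorial content you are trying to prove, not a routine consequence of uniform convergence.

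The repair is to make your rearrangement argument self-sufficient by running it at the \emph{extremal} density rather than at the Sauer--Shelah constant $\alpha$. Let $\dind_k=\max\{|I|:I\subset\{1,\dots,k\}\text{ an independence set for }X\}$; this sequence is subadditive, so by Fekete's Lemma $F:=\inf_k \dind_k/k=\lim_k \dind_k/k$, and $F\ge\alpha>0$ by Sauer--Shelah. Choosing $I_n$ with $|I_n|=\dind_n\ge Fn$ and applying your $s_j$-minimizer argument with $F-\epsilon$ in place of $\alpha-\epsilon$, followed by your pointwise-limit and diagonalization steps, yields an independence set $J$ with $\sh(J)\ge F$. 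The point of using $F$ is that the matching upper bound is now automatic: \emph{every} independence set $J$ satisfies $|J\cap\{1,\dots,k\}|\le \dind_k$, hence $\ud(J)\le\lim_k\dind_k/k=F$. Since $\sh(J)\le\ld(J)\le\ud(J)$, all four quantities equal $F$, so $d(J)$ exists and equals $\sh(J)=F>0$. This sandwich is precisely how the paper's Theorem~\ref{thm:density} concludes, and it removes any need for ergodic-theoretic machinery.
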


This is a strengthening of \cite[Theorem 8.1]{Weiss}. A general result applicable to all dynamical systems was proved by Glasner and Weiss \cite{GW} and Huang and Ye \cite[Theorem 7.3]{HY06}.
Kerr and Li extended it further in \cite{KerrLi}. We add the Shnirelman density to the picture, which shows that the independence set is even more structured. A~similar result holds for more numerous alphabets (cf. \cite[Theorem 8.3]{Weiss}).
We state it without a proof.

\begin{theorem}\label{main2}
Let $r\ge 2$ and $X$ be a shift space over $\Al=\{0,1,\ldots,r-1\}$. Then $h(X)>\log(r-1)$ if and only if $X$ is independent over a set $A$ whose asymptotic density exists, is positive, and is equal to its Shnirelman density.
\end{theorem}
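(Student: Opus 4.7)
The plan is to mirror the proof of Theorem \ref{main}, replacing each binary-specific combinatorial input with its $r$-ary analogue. The threshold $\log(r-1)$ is forced by the extremal example $X = \{0, \ldots, r-2\}^\N \subset \Al^\N$: this shift has entropy exactly $\log(r-1)$, yet admits no independence set in our sense, because one letter of $\Al$ never appears. So the natural statement is equivalence of a strict excess over $\log(r-1)$ with the existence of a structured independence set.

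For the forward direction I would assume $h(X) > \log(r-1)$ and invoke the infimum form of entropy (Fekete) to obtain $|\Bl_n(X)| \geq (r-1+\delta)^n$ for some $\delta > 0$ and every $n \geq 1$. The combinatorial engine is then a density version of the $r$-ary Sauer-Shelah lemma in the tradition of Karpovsky and Milman: there exists $\gamma = \gamma(\delta, r) > 0$ such that any $F \subset \Al^n$ with $|F| \geq (r-1+\delta)^n$ admits some $J \subset \{1, \ldots, n\}$ with $|J| \geq \gamma n$ on which $F$ is \emph{strongly shattered}, in the sense that the restriction map $F \to \Al^J$ is surjective. Applied at each $n$ to $F = \Bl_n(X)$, this produces a family of finite shattered sets $J_n \subset \{1, \ldots, n\}$ with $|J_n| \geq \gamma n$.

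The main obstacle will be assembling from $\{J_n\}$ a single $A \subset \N$ that is an independence set for the full shift space $X$ and satisfies $d(A) = \sh(A) > 0$. I would proceed via compactness and diagonalization: after passing to a subsequence along which the initial segments of the $J_n$ stabilize, a limit set $A$ emerges, and closedness of $X$ upgrades each finite shattering assertion into a genuine independence statement for $X$. The harder task is arranging $d(A) = \sh(A)$; this is engineered through a block-periodic construction in which $A$ is built by concatenating shattered templates chosen so that $|A \cap \{1, \ldots, n\}|/n$ never dips below its asymptotic value on any initial segment. This bookkeeping is the main modification inherited from the proof of Theorem \ref{main} and is where the Shnirelman refinement enters.

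The converse is short. If $A$ is an independence set with $d(A) = \alpha > 0$, then distinct choices $\varphi \in \Al^{A \cap \{1, \ldots, n\}}$ yield distinct prefixes in $\Bl_n(X)$, so $|\Bl_n(X)| \geq r^{|A \cap \{1, \ldots, n\}|}$ and hence $h(X) \geq \alpha \log r$. To deliver the strict bound $h(X) > \log(r-1)$, one quantifies the Karpovsky-Milman constant in the forward direction so that the resulting $\gamma$ satisfies $\gamma \log r > \log(r-1)$, making the two implications quantitatively compatible.
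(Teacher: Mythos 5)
The paper offers no proof of Theorem \ref{main2} (it is explicitly ``stated without a proof''), so the only benchmark is the proof of Theorem \ref{main}, which is plainly the intended template: replace Lemmas \ref{suma} and \ref{indep} by the Karpovsky--Milman bound and reuse everything else. Measured against that, your forward direction has the right skeleton --- Fekete gives $|\Bl_n(X)|\ge(r-1+\delta)^n$, and the $r$-ary Sauer--Shelah/Karpovsky--Milman lemma gives, for each $n$, a set $J_n\subset\{1,\ldots,n\}$ with $|J_n|\ge\gamma n$ on which $\Bl_n(X)$ is independent --- but the assembly step is wrong as described. Independence sets do not concatenate: if $J\subset\{1,\ldots,n\}$ is an independence set for $\Bl_n(X)$ and $J'\subset\{1,\ldots,m\}$ one for $\Bl_m(X)$, then $J\cup(n+J')$ need not be one for $\Bl_{n+m}(X)$ (in the golden mean shift, $\{1\}$ and $\{1\}$ are independence sets while $\{1,2\}$ is not), so ``concatenating shattered templates'' cannot produce the set $A$. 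This is exactly the difficulty the proof of Theorem \ref{main} is built to avoid: the characteristic functions of independence sets form a factorial, prolongable binary language, hence a shift space $I_X$, and Theorem \ref{thm:density} --- whose chopping argument is the genuinely hard step --- extracts a single point of $I_X$ whose set of $1$'s has Shnirelman density equal to its asymptotic density equal to $\adsymbol_1(I_X)>0$. Since $I_X$ is binary regardless of $r$, that machinery transfers verbatim; you should invoke it rather than replace it with a construction that fails.

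The converse direction is the fatal gap. From an independence set $A$ with $\sh(A)=\ad(A)=\alpha>0$ you get $|\Bl_n(X)|\ge r^{\alpha n}$ and hence $h(X)\ge\alpha\log r$, which is weaker than $\log(r-1)$ whenever $\alpha<\log(r-1)/\log r$. Your proposed repair --- tuning the Karpovsky--Milman constant so that $\gamma\log r>\log(r-1)$ --- is a non sequitur: the ``if'' half of an equivalence must hold for \emph{every} set $A$ satisfying the hypotheses, not only those manufactured in the ``only if'' half, and in any case $\gamma\to0$ as $h(X)$ decreases to $\log(r-1)$. Worse, the implication appears to be false as literally stated for $r\ge3$: take $r=3$ and let $Z\subset\{0,1,2\}^{\N}$ be the shift of finite type consisting of all sequences in which any two nonzero symbols lie at distance at least $10$. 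Then $Z$ is independent over $A=\{1,11,21,\ldots\}$ (fill the positions of $A$ arbitrarily and put $0$ elsewhere), with $\sh(A)=\ad(A)=1/10>0$, yet $|\Bl_{10}(Z)|=21$, so $h(Z)\le\frac{1}{10}\log 21<\log 2=\log(r-1)$. So no argument can close this direction without modifying the statement (restricting the notion of independence, or settling for the one-sided implication as in the cited result of Weiss); this should be flagged rather than papered over.
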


Before the proof we introduce all the necessary tools.

\subsection{Limiting frequency}
Our main technical tool is the limiting frequency.
\begin{definition}[Limiting frequency]
Let $\Al$ be a finite alphabet and $X$ be a shift space over $\Al$. For every symbol $a\in\Al$
and every point $x\in X$ we define the \emph{characteristic set $\chi_a(x)$ of $a$ in $x$}
as the set of positions at which $a$ appears in $x$, that is,
\[
\chi_a(x)=\{j\in\N:x_j=a\}.
\]
Let $w=w_1\ldots w_k$ and let $||w||_a$ denote the number of $a$'s in $w$, that is
\[
||w||_a=|\{1\le j \le k:w_j=a\}|.
\]
Let $\dind^a_k(X)$ be the maximal number of occurrences of the symbol $a$ among all blocks $w\in\Bl_k(X)$, that is,
\[
\dind^a_k(X)= \max\left\{||w||_a\,:\,w\in\Bl_k(X)\right\}.
\]
The sequence $\{\dind^a_k(X)\}_{k=1}^\infty$ is non-negative and subadditive, that is
$$0\le \dind^a_{m+n}(X)\le \dind^a_m(X)+\dind^a_n(X)$$
for any positive integers $m$ and $n$. By Fekete's Lemma the sequence $\{\dind^a_k(X)/k\}_{k=1}^\infty$ converges to its greatest lower bound. We call this limit the \emph{limiting frequency of $a$ in $X$} and denote it by
\begin{equation}\label{eq:adsymbol}
\adsymbol_a(X)=\lim_{k\to\infty}\frac{\dind^a_k(X)}{k}=\inf_{k\ge 1}\frac{\dind^a_k(X)}{k}.
\end{equation}
\end{definition}

It can be shown that a limiting frequency of a symbol $a$
is precisely the maximum of measures of the cylinder of $a$ with respect to ergodic invariant measures supported on $X$.
The next theorem is a strengthening of a well-known result about the ordinary density. It also follows\footnote{We are indebeted to Vitaly Bergelson for pointing this out.} from the Ruzsa Theorem \cite[Theorem 4, p. 323]{Ruzsa} (see also \cite{Peres}).
\begin{theorem}\label{thm:density}
Let $X$ be a shift space over an alphabet $\Al$. Then for every symbol $a\in\Al$
there exists a point $\omega_a\in X$ such that
\[
\sh(\chi_a(\omega_a))=\ad(\chi_a(\omega_a))=\adsymbol_a(X).
\]
\end{theorem}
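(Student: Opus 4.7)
The plan is to build a single point $\omega_a\in X$ whose prefix-frequency of $a$ stays above $\alpha:=\adsymbol_a(X)$. For any $\omega\in X$ and any $n\in\N$ the prefix $\omega_{[1,n]}$ belongs to $\Bl_n(X)$, hence $|\chi_a(\omega)\cap\{1,\ldots,n\}|\le\dind^a_n(X)$. Dividing by $n$ and passing respectively to the infimum and to the limit yields $\sh(\chi_a(\omega))\le\alpha$ and $\ud(\chi_a(\omega))\le\alpha$ for every $\omega\in X$. It therefore suffices to find $\omega_a\in X$ such that $|\chi_a(\omega_a)\cap\{1,\ldots,k\}|\ge\alpha k$ for every $k\in\N$: this forces $\sh(\chi_a(\omega_a))=\alpha$, and then $\sh\le\ld\le\ud\le\alpha$ pinches $\ad(\chi_a(\omega_a))=\alpha$ as well. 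The case $\alpha=0$ is trivial, so assume $\alpha>0$.

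The core step will be the following claim: for every $\beta\in(0,\alpha)$ and every $N\in\N$ there is a block $u\in\Bl_N(X)$ with $||u_{[1,k]}||_a\ge\beta k$ for all $k\in\{1,\ldots,N\}$. To prove it, choose any integer $N'\ge N(1-\beta)/(\alpha-\beta)$ and any $v\in\Bl_{N'}(X)$ with $||v||_a=\dind^a_{N'}(X)\ge\alpha N'$. Define $g(k)=||v_{[1,k]}||_a-\beta k$ for $0\le k\le N'$, and let $j^*$ be any minimizer of $g$ on $\{0,\ldots,N'\}$. From $g(j^*)\le g(0)=0$ one has $||v_{[1,j^*]}||_a\le\beta j^*$; combined with the trivial bound $||v_{[j^*+1,N']}||_a\le N'-j^*$ this gives
$$\alpha N'\le ||v||_a\le\beta j^*+(N'-j^*)=N'-(1-\beta)j^*,$$
so $j^*\le N'(1-\alpha)/(1-\beta)$ and therefore $N'-j^*\ge N'(\alpha-\beta)/(1-\beta)\ge N$. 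Minimality of $j^*$ also gives $g(j^*+k)\ge g(j^*)$ for every $k\in\{0,\ldots,N\}$, which rewrites as $||v_{[j^*+1,j^*+k]}||_a\ge\beta k$, so $u:=v_{[j^*+1,j^*+N]}$ witnesses the claim.

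To conclude, fix a sequence $\beta_n\in(0,\alpha)$ with $\beta_n\to\alpha$, apply the claim with $\beta=\beta_n$ and $N=n$ to get blocks $u_n\in\Bl_n(X)$, and use prolongability of $\Bl(X)$ to extend each $u_n$ to a point $x_n\in X$. By compactness of $X$ some subsequence $x_{n_i}$ converges to a point $\omega_a\in X$. For every fixed $k\in\N$, as soon as $i$ is large enough one has both $n_i\ge k$ and $(x_{n_i})_{[1,k]}=(\omega_a)_{[1,k]}$ (product topology), so $|\chi_a(\omega_a)\cap\{1,\ldots,k\}|=||(u_{n_i})_{[1,k]}||_a\ge\beta_{n_i}k$. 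Letting $i\to\infty$ yields $|\chi_a(\omega_a)\cap\{1,\ldots,k\}|\ge\alpha k$ for every $k$, and the reduction from the first paragraph completes the proof.

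The main obstacle is the second paragraph: converting a \emph{global} lower bound $||v||_a\ge\alpha N'$ on a single long block into a \emph{uniform-on-prefixes} bound of $\beta$ on a sub-block of length at least $N$. The argmin of $g$ is precisely the position after which the prefix densities cannot drop below $\beta$, and the slack $\alpha-\beta>0$ is what forces the resulting sub-block to be long enough. The remaining steps (prolongability, diagonal compactness, and the final density bookkeeping) are routine.
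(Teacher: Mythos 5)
Your proposal is correct, and the verification in your first paragraph (both $\sh$ and $\ud$ of $\chi_a(\omega)$ are at most $\adsymbol_a(X)$ for \emph{every} $\omega\in X$, so only the lower bound on prefixes needs work) matches the paper's reduction. Where you diverge is in the combinatorial core. The paper proves the exact finite statement: for every $k$ there is a block $w^{(k)}\in\Bl_k(X)$ all of whose prefixes have $a$-frequency at least $\adsymbol_a(X)$ itself; it does so by contradiction, quantifying the worst shortfall $\alpha_0$ over prefixes of length at most $k$, chopping a very long extremal block ($p=k^2N+1$) into bad pieces, and deriving that the extremal block has too few $a$'s. You instead prove only the relaxed bound $\beta<\alpha$ for blocks of each length, but you get it by a direct and clean device: take a long extremal block $v$, minimize $g(k)=\|v_{[1,k]}\|_a-\beta k$, and read off a sufficiently long subword after the argmin whose prefixes all clear $\beta$ (the slack $\alpha-\beta$ guaranteeing its length); the exact bound $\alpha$ is then recovered only for the limit point, by letting $\beta_n\to\alpha$ before the compactness step. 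This is essentially the maximal-ergodic-theorem/Ruzsa-type argument the paper alludes to in a footnote, and it trades the paper's stronger finite conclusion (a length-$k$ witness with all prefix densities $\ge\alpha$, for every $k$) for a substantially shorter and less delicate proof; both routes end with the same diagonal/compactness extraction of $\omega_a$ and the same pinching of the densities. The only points worth making explicit in a final write-up are that $u=v_{[j^*+1,j^*+N]}\in\Bl_N(X)$ because the language is factorial, and that a point of $X$ beginning with $u_n$ exists because the language is prolongable and membership in $X$ is detected on prefixes --- both of which you invoke correctly.
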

\begin{proof}
Clearly, $\adsymbol_a(X) =0$ implies $\sh(\chi_a(x))=\ad(\chi_a(x))=0$ for all $x\in X$. We may therefore assume that $\adsymbol_a(X)>0$.

For every $m>0$ let $\bar{w}^{(m)}=\bar{w}^{(m)}_1\ldots \bar{w}^{(m)}_m\in\Bl_m(X)$ be a block which attains the maximal number of occurrences of the symbol $a$, that is,
\[
||\bar{w}^{(m)}||_a=\dind^a_m(X)= \max\left\{||w||_a:w\in\Bl_m(X)\right\}.
\]

We claim that for each integer $k>0$ there exists a block $w^{(k)}\in\Bl_k(X)$ such that for all $1\le j \le k$ we have
\begin{equation}\label{eq:star}
j\cdot \adsymbol_a(X)\le ||w^{(k)}_1\ldots w^{(k)}_j||_a.
\end{equation}
In other words, we claim that given $k>0$ we can find a word of length $k$ such that for each $1\le j \le k$ the average number of occurrences of $a$ in the prefix
$w^{(k)}_1\ldots w^{(k)}_j$ of $w^{(k)}$ is not smaller than the limiting frequency of $a$.

For the proof of the claim, assume on the contrary that \eqref{eq:star} does not hold for some $k>0$. This means that every block $w$ of length greater than or equal to $k$ has a prefix of length $1\le j\le k$ for which \eqref{eq:star} fails, that is, such that
\[
\frac{1}{j} ||w_1\ldots w_j||_a < \adsymbol_a(X)\le 1.
\]
Note that the left hand side of the above inequality is always a fraction of the form $b/c$, where $0\le b < c\le k$. Since $k$ is fixed, there are only finitely many such fractions, hence the number $\alpha_0=\min\{\adsymbol_a(X)- b/c>0:0\le b<c\le k \}$ is well-defined (the minimum exists) and is positive.

Take a positive integer $N$ such that $\lfloor N\alpha_0\rfloor=1$. Since we assumed that our claim fails, and since $p=k^2N+1$, we can chop the block $\bar{w}^{(p)}$ defined above into at least $Nk+1$ pieces, each of length $k$ at most, and for all but at most one of them \eqref{eq:star} fails. To see it, we can imagine that we apply to the block $\bar{w}^{(p)}$ the following procedure: for the first step, we set $l(0)=0$. Then we find the smallest $j$ such that \eqref{eq:star} fails for $\bar{w}^{(p)}$. By our assumption there is such $j$, moreover, $j\le k$. We set $l(1)=j$ and define
\[
\alpha_1= \adsymbol_a(X)-\frac{1}{j} ||\bar{w}^{(p)}_{1}\bar{w}^{(p)}_{2}\ldots \bar{w}^{(p)}_j||_a.
\]
Clearly $\alpha_0\le \alpha_1$. For the next step we consider
\[
v=\bar{w}^{(p)}_{j+1}\bar{w}^{(p)}_{j+2}\ldots \bar{w}^{(p)}_p,
\]
that is, we chop off the prefix of length $j$ from $\bar{w}^{(p)}$. We proceed inductively. Assume that we have performed $s$ steps. This means that $l(0)<\ldots<l(s)$ and $\alpha(1),\ldots,\alpha(s)$ are defined for some $s\ge 1$. If $p-l(s)<k$, then we set $l(s+1)=p$ and we finish the construction. Otherwise, we have $p-l(s)\ge k$ and we consider \[v=\bar{w}^{(p)}_{l(s)+1}\bar{w}^{(p)}_{l(s)+2}\ldots \bar{w}^{(p)}_p.\] Then we find the smallest $j$ such that \eqref{eq:star} fails for $v$ . We set $l(s+1)=l(s)+j$ and
\[
\alpha_{s+1}=  \adsymbol_a(X)-\frac{1}{l(s+1)-l(s)}||\bar{w}^{(p)}_{[l(s)+1,l(s+1)]}||_a\ge\alpha_0>0.
\]
Let $t$ denote the number of steps we have performed. We have found a~strictly increasing sequence of integers $\{l(s)\}_{s=0}^{t+1}$ as well as a sequence of positive real numbers $\{\alpha_s\}_{s=1}^t$ such that $l(0)=0$, $l(t+1)=p$, for every $j=1,2\ldots,t+1$ we have $0\le l({j})-l(j-1)\le k$, and
\begin{multline}\label{eq:x}
||\bar{w}^{(p)}_{[l(j-1)+1,l(j)]}||_a  = (l({j})-l(j-1))(\adsymbol_a(X)-\alpha_{j})\le\\ (l({j})-l(j-1))(\adsymbol_a(X) - \alpha_0) \text{ for }j=1,\ldots,t.
\end{multline}
Note that it may happen that $l(t)=l(t+1)=p$, but the inequality $||\bar{w}^{(p)}_{[l(t)+1,p]}||_a<k$ is valid regardless (if $l(t)+1>p$, then we agree to identify $\bar{w}^{(p)}_{[l(t)+1,p]}$ with the empty block).
Clearly, we have $t\ge kN$, hence by the inequality in \eqref{eq:x} and the definition of $N$ we have
\begin{align*}
||\bar{w}^{(p)}||_a&=\bigg(\sum_{j=1}^{t} ||\bar{w}^{(p)}_{[l(j-1)+1,l(j)]}||_a \bigg) + ||\bar{w}^{(p)}_{[l(t)+1,p]}||_a\\
                           &\le p(\adsymbol_a(X)-\alpha_0)+k\le p\adsymbol_a(X)-k^2N\alpha_0+k<p\adsymbol_a(X).
\end{align*}
Now we have the required contradiction, since $||\bar{w}^{(p)}||_a$ should be greater or equal to $p\adsymbol_a(X)$.

We proved our claim is true, hence we have a sequence of blocks $\{w^{(k)}\}_{k=1}^\infty$, with $w^{(k)}$ from $\Bl_k(X)$, satisfying \eqref{eq:star}.
We will use\footnote{This can be obtained easily by using a standard compactness argument, but the referee has encouraged us to present a completely elementary proof of this fact.} the $w^{(k)}$ to find a point $x\in X$ such that for each $k>0$ the block $x_{[1,k]}$ is a prefix of some $w^{(l(k))}$ with $l(k)\ge k$.

For $k\in\N$ we inductively find a decreasing sequence $S_k$ of subsets of $\N$ so that for any $n\in S_k$ with $n\ge k$ all prefixes $w^{(n)}_{[1,k]}$ are equal. We begin by picking a symbol $a\in\Al$ so that $w^{(k)}_1=a$ for infinitely many $k$'s. Let $S_1=\{k\in\N:w^{(k)}_1=a\}$. Next, since the set of possible blocks of length $2$ in $X$ is finite, there is an infinite subset $S_2\subset S_1$ such that the prefix $w^{(n)}_{[1,2]}$ is the same for all $n\in S_2$. Continuing this way, we find for each $k\ge 2$ and infinite set $S_k\subset S_{k-1}$ so that all blocks $w^{(n)}_{[1,k]}$ are equal for $n\in S_k$. Define $x$ to be the sequence with $x_k=w^{(n)}_k$ for some $n\in S_k$ with $n\ge k$. Note that then $x_{[1,k]}=w^{(n)}_[1,k]$ for every $n\in S_k\setminus\{1,\ldots,k-1\}$ since $S_k\subset S_{k-1}$ for $k\ge 2$. Therefore for each $k>0$ the block $x_{[1,k]}$ is a prefix of some $w^{(l(k))}$ with $l(k)\ge k$. In particular, $x_{[1,k]}\in\Bl(X)$ and hence $x\in X$.

For every $k\in\N$ we have
\begin{equation}\label{ineq}
\adsymbol_a(X) \le \frac{1}{k}||x_{[1,k]}||_a\le \frac{1}{k}\dind^a_k(X),
\end{equation}
where the first inequality above is a consequence of the claim and the construction of the point $x$, while the second inequality follows from the definition of $\dind^a_k(X)$. Taking the infima over all $k\in\N$ of all terms in \eqref{ineq} and applying \eqref{eq:adsymbol} we obtain
\[
\adsymbol_a(X) \le \inf_{k\in\N}\frac{1}{k}||x_{[1,k]}||_a=\sh(\{j\in\N: x_j=a\})\le\inf_{k\in\N}\frac{1}{k}\dind_a^k(X)=\adsymbol_a(X).
\]
Since the values $\dind_a^k(X)/k$ converge to $\adsymbol_a(X)$ as $k$ goes to infinity (see \eqref{eq:adsymbol}) we conclude that
\[
\lim_{k\to\infty}\frac{1}{k}||x_{[1,k]}||_a= \ad(\{j\in\N: x_j=a\})=\sh(\{j\in\N: x_j=a\})=\adsymbol_a(X).\qedhere
\]
\end{proof}

\subsection{Some combinatorial Lemmas}
The following lemma is well-known.
\begin{lemma}\label{suma}
Let $0<\eps\le 1/2$ and $n\ge 1$. Then
\[
\sum_{j=0}^{\lfloor n \eps \rfloor}{\binom{n}{j}}\le 2^{n\cdot H(\eps)},
\]
where $H(\eps)=-\eps\log \eps - (1-\eps)\log (1-\eps)$.
\end{lemma}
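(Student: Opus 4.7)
The plan is to deduce the estimate from the binomial expansion of $1 = (\eps + (1-\eps))^n$. The key observation is that the hypothesis $\eps \le 1/2$ forces $\eps/(1-\eps) \le 1$, so the function $j \mapsto (\eps/(1-\eps))^j$ is nonincreasing in $j$, which is exactly what is needed to turn a term-by-term bound into a bound on the partial sum.

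First I would write out
\[
1 \;=\; (\eps + (1-\eps))^n \;=\; \sum_{j=0}^{n} \binom{n}{j}\eps^j(1-\eps)^{n-j}
\;\ge\; \sum_{j=0}^{\lfloor n\eps\rfloor} \binom{n}{j}\eps^j(1-\eps)^{n-j},
\]
and then factor $(1-\eps)^n$ out of the truncated sum to obtain
\[
1 \;\ge\; (1-\eps)^n \sum_{j=0}^{\lfloor n\eps\rfloor} \binom{n}{j}\left(\frac{\eps}{1-\eps}\right)^{j}.
\]
Since $\eps \le 1/2$, the base $\eps/(1-\eps)$ lies in $(0,1]$, so replacing the exponent $j$ by the larger real number $n\eps$ only decreases each term. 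Hence every summand with $0 \le j \le \lfloor n\eps\rfloor$ satisfies
\[
\left(\frac{\eps}{1-\eps}\right)^{j} \;\ge\; \left(\frac{\eps}{1-\eps}\right)^{n\eps}.
\]

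Pulling this uniform lower bound out of the sum produces
\[
1 \;\ge\; (1-\eps)^{n}\left(\frac{\eps}{1-\eps}\right)^{n\eps} \sum_{j=0}^{\lfloor n\eps\rfloor} \binom{n}{j}
\;=\; \eps^{n\eps}(1-\eps)^{n(1-\eps)} \sum_{j=0}^{\lfloor n\eps\rfloor} \binom{n}{j}.
\]
The final step is to identify the prefactor with $2^{-n H(\eps)}$: taking $\log_2$ gives $n\eps\log\eps + n(1-\eps)\log(1-\eps) = -nH(\eps)$. Rearranging yields the claimed inequality $\sum_{j=0}^{\lfloor n\eps\rfloor}\binom{n}{j} \le 2^{n H(\eps)}$.

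There is no substantive obstacle here; the only subtlety is to remember that the hypothesis $\eps \le 1/2$ is used precisely to ensure $\eps/(1-\eps)\le 1$, so that shrinking the exponent from $j$ to $n\eps$ genuinely decreases the power. For $\eps>1/2$ the inequality direction would reverse and the argument would fail, which is consistent with the lemma's hypothesis.
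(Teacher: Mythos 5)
Your proof is correct and is the standard Chernoff-type argument for this bound: expand $1=(\eps+(1-\eps))^n$, truncate, and use $\eps/(1-\eps)\le 1$ to replace the exponent $j$ by $n\eps$ uniformly. The paper states this lemma as well-known and gives no proof, so there is nothing to compare against; your argument correctly supplies the omitted details (note that $\log$ must be read as $\log_2$ here, consistent with the paper's use of $2^{nh(X)}\le|\Bl_n(X)|$ in Lemma~\ref{last}).
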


\begin{definition}[Independence sets for blocks]
Let $\F$ be a (possibly empty) family of binary blocks of length $n\ge 0$. We say that $\F$ is \emph{independent over a set $J\subset\N$} and $J$ is an \emph{independence set for $\F$}
if for each map $\varphi\colon J\to\{0,1\}$ there is a block $w\in\F$ whose $i$-th symbol is $\varphi(i)$ for every $i\in J$.

We denote the collection of all sets of independence for $\F$ by $\Ind(\F)$. We assume the convention that the empty set is a set of independence for every (including empty) family of $n$-blocks.
\end{definition}

We will need a variant of the famous Sauer-Perles-Shelah Lemma (Lemma \ref{indep} below, see \cite{Sauer,Shelah}).
The proof follows \cite{shattering}, where the next lemma (stated for sets) is attributed to Alain Pajor \cite{Pajor}.

\begin{lemma}
Let $\F$ be a family of binary blocks of length $n\ge 0$. Then $|\Ind(\F)|\ge|\F|$.
\end{lemma}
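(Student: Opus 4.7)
The plan is to prove this by induction on the block length $n$, following the standard Pajor argument for the Sauer--Shelah lemma. For the base case $n=0$, the only candidate block is the empty block $\empw$, so either $\F=\emptyset$ and the inequality is trivial, or $\F=\{\empw\}$ and $\Ind(\F)$ contains the empty set by convention, giving $|\Ind(\F)|\ge 1 =|\F|$.

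For the inductive step, suppose the lemma holds for families of $(n-1)$-blocks and let $\F$ be a family of $n$-blocks. The key idea is to split $\F$ according to the last symbol by defining two families of $(n-1)$-blocks
\begin{align*}
\F_0&=\{u\in\{0,1\}^{n-1}: u0\in\F\},\\
\F_1&=\{u\in\{0,1\}^{n-1}: u1\in\F\}.
\end{align*}
A prefix $u$ with only one of $u0,u1$ in $\F$ is counted once in $\F_0\cup\F_1$ and zero times in $\F_0\cap\F_1$, while a prefix with both extensions in $\F$ is counted once in each. Therefore
\[
|\F|=|\F_0\cup\F_1|+|\F_0\cap\F_1|.
\]
By the inductive hypothesis applied to both families, $|\Ind(\F_0\cup\F_1)|\ge|\F_0\cup\F_1|$ and $|\Ind(\F_0\cap\F_1)|\ge|\F_0\cap\F_1|$.

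The remaining step is to inject the disjoint union of these two collections of independence sets into $\Ind(\F)$. First, every $J\in\Ind(\F_0\cup\F_1)$ is already an element of $\Ind(\F)$ when viewed as a subset of $\{1,\ldots,n\}$: any $\varphi\colon J\to\{0,1\}$ is realized by some $u\in\F_0\cup\F_1$, and then either $u0$ or $u1$ is in $\F$ and still agrees with $\varphi$ on $J\subseteq\{1,\ldots,n-1\}$. Second, for every $J\in\Ind(\F_0\cap\F_1)$, the augmented set $J\cup\{n\}$ belongs to $\Ind(\F)$: given $\varphi\colon J\cup\{n\}\to\{0,1\}$, the restriction $\varphi|_J$ is realized by some $u\in\F_0\cap\F_1$, and then $u\varphi(n)\in\F$ extends $\varphi$ as required. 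The sets produced in the first case avoid $n$ while those in the second contain $n$, so the two collections are disjoint inside $\Ind(\F)$, and the assignment $J\mapsto J\cup\{n\}$ in the second case is injective. Combining the three inequalities gives
\[
|\Ind(\F)|\ge|\Ind(\F_0\cup\F_1)|+|\Ind(\F_0\cap\F_1)|\ge|\F_0\cup\F_1|+|\F_0\cap\F_1|=|\F|,
\]
completing the induction. The only subtle point, and the one I would double-check carefully, is the counting identity $|\F|=|\F_0\cup\F_1|+|\F_0\cap\F_1|$, since it is the place where the combinatorics of the split is encoded; everything else is straightforward bookkeeping with the definition of an independence set.
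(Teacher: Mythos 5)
Your overall strategy is the right one and is essentially the paper's: induct on $n$, split $\F$ by the last symbol into the prefix families $\F_0$ and $\F_1$, and observe that a prefix lying in both families lets you adjoin the last coordinate to an independence set. The paper applies the inductive hypothesis to $\F_0$ and $\F_1$ and does the inclusion--exclusion on the side of the independence sets (namely $|\Ind(\F)|\ge|\Ind(\F_0)\cup\Ind(\F_1)|+|\Ind(\F_0)\cap\Ind(\F_1)|$), whereas you apply it to $\F_0\cup\F_1$ and $\F_0\cap\F_1$ and do the inclusion--exclusion on the side of the families; these are dual bookkeepings of the same idea, and your counting identity $|\F|=|\F_0\cup\F_1|+|\F_0\cap\F_1|$ --- the step you flagged as the one to double-check --- is correct.

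The step that actually fails is the other one: the claim that $J\cup\{n\}\in\Ind(\F)$ for \emph{every} $J\in\Ind(\F_0\cap\F_1)$. When $\F_0\cap\F_1=\emptyset$, the convention adopted in the paper still puts the empty set into $\Ind(\F_0\cap\F_1)$, but your justification (``$\varphi|_J$ is realized by some $u\in\F_0\cap\F_1$'') has no $u$ to point to, and the conclusion is genuinely false in this case: for $\F=\{00\}$ one has $\F_0=\{0\}$ and $\F_1=\emptyset$, so your chain of inequalities would give $|\Ind(\F)|\ge|\Ind(\{0\})|+|\Ind(\emptyset)|=1+1=2$, while in fact $\Ind(\F)=\{\emptyset\}$, since the set $\{2\}$ is not an independence set for $\{00\}$ (the assignment $\varphi(2)=1$ cannot be realized). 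The final inequality $|\Ind(\F)|\ge|\F|$ survives because in this degenerate case $|\F_0\cap\F_1|=0$ and the first collection alone already gives $|\Ind(\F)|\ge|\Ind(\F_0\cup\F_1)|\ge|\F_0\cup\F_1|=|\F|$. So the fix is one line: dispose of the case $\F_0\cap\F_1=\emptyset$ separately before running the augmentation argument (the paper does the analogous thing by first settling the case where one of $\F_0$, $\F_1$ is empty). With that caveat added, your proof is complete and correct.
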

\begin{proof}
The proof goes by induction on $n$. For $n=0$ the situation is clear: $\F\subset\{\empw\}$, hence $|\F|\le 1$ and $\Ind(\F)\ge 1$ because of the empty set.

Assume that the lemma holds for all families of $n$-blocks for some $n\ge 0$. Consider a family $\F\subset\{0,1\}^{n+1}$.
Let $\F_0$ be the family of prefixes of length $n$ of all blocks in $\F$ ending with $0$. Analogously, we define $\F_1$
as the family of prefixes of length $n$ of all blocks in $\F$ ending with $1$, that is,
\begin{align*}
\F_0&=\{w\in\{0,1\}^n:w0\in\F\},\\
\F_1&=\{w\in\{0,1\}^n:w1\in\F\}.
\end{align*}
Since evidently, $|\F|=|\F_0|+|\F_1|$, we may apply the inductive assumption to infer that our lemma holds whenever $|\F_0|=0$ or $|\F_1|=0$.
So assume that $|\F_0|\neq0$ and $|\F_1|\neq0$. Then every set which is independent for $\F_0$ or $\F_1$ is also independent for
$\F$. Unfortunately, there are sets which are independent for both $\F_0$ and $\F_1$ (the empty set, for example).
However, if $J\in\Ind(\F_0)\cap\Ind(\F_1)$, then $J\cup\{n+1\}$ is independent for $\F$, but $J\cup\{n+1\}$ is neither in $\Ind(\F_0)$ nor in $\Ind(\F_1)$.
Therefore, by the inclusion-exclusion formula
\[
|\Ind(\F)|\ge |\Ind(\F_0)\cup\Ind(\F_1)|+|\Ind(\F_0)\cap\Ind(\F_1)|=|\Ind(\F_0)|+|\Ind(\F_1)|.
\]
But applying the inductive hypothesis we obtain
\[
|\Ind(\F_0)|+|\Ind(\F_1)|\ge |\F_0|+|\F_1|=|\F|
\]
and the proof is finished.
\end{proof}

\begin{lemma}\label{indep}
Let $\F\subset\{0,1\}^n$ be a family of binary blocks of length $n\ge 1$.
If for some $1\le k\le n$ we have
\[
|\F|>\sum_{j=0}^{k-1}\binom{n}{j},
\]
then $\F$ is independent over some set of cardinality $k$.
\end{lemma}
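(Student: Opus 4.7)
The plan is to deduce this directly from the Pajor-style lemma just proved, namely that $|\Ind(\F)|\ge|\F|$. The inputs to the proof are (i) this lower bound, (ii) a pigeonhole count of subsets of $\{1,\dots,n\}$ of bounded cardinality, and (iii) the observation (used earlier in the shift-space setting but equally valid here) that any subset of an independence set for $\F$ is again an independence set for $\F$.

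First I would verify the downward closure: if $J\in\Ind(\F)$ and $J'\subset J$, then for any $\varphi'\colon J'\to\{0,1\}$ extend $\varphi'$ arbitrarily to $\varphi\colon J\to\{0,1\}$; the block $w\in\F$ witnessing independence for $\varphi$ automatically witnesses independence for $\varphi'$. So $\Ind(\F)$ is closed under taking subsets. Consequently, if $\Ind(\F)$ contains any set of cardinality $\ge k$, it also contains one of cardinality exactly $k$, and we are done.

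Next I would count. The family of subsets of $\{1,\dots,n\}$ of cardinality strictly less than $k$ has exactly
\[
\sum_{j=0}^{k-1}\binom{n}{j}
\]
elements. By the preceding lemma, $|\Ind(\F)|\ge |\F|>\sum_{j=0}^{k-1}\binom{n}{j}$, so $\Ind(\F)$ cannot be contained in the collection of subsets of $\{1,\dots,n\}$ of cardinality at most $k-1$. Hence some $J\in\Ind(\F)$ has $|J|\ge k$, and by the downward closure observation any $k$-element subset of this $J$ is the desired independence set.

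There is no real obstacle here: the Pajor lemma does all the combinatorial work, and the statement of Lemma~\ref{indep} is just the contrapositive of the pigeonhole bound $|\Ind(\F)|\le\sum_{j=0}^{k-1}\binom{n}{j}$ that would hold if no independence set of size $k$ existed. The only point that needs to be made explicit is the hereditary property of $\Ind(\F)$, which should be stated in a single sentence before invoking the counting argument.
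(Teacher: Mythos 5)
Your proposal is correct and follows essentially the same route as the paper: invoke the bound $|\Ind(\F)|\ge|\F|$ from the preceding lemma, compare with the count $\sum_{j=0}^{k-1}\binom{n}{j}$ of subsets of $\{1,\ldots,n\}$ of cardinality less than $k$, and conclude that $\Ind(\F)$ contains a set of cardinality at least $k$. Your explicit remark that $\Ind(\F)$ is closed under taking subsets (so a set of cardinality exactly $k$ can be extracted) is a small point the paper leaves implicit, and it is a welcome clarification.
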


\begin{proof}
By the previous lemma and our assumption we have
\[
|\Ind(\F)|\ge |\F|>\sum_{j=0}^{k-1}\binom{n}{j}.
\]
As the last sum is the cardinality of the family of all subsets of $\{1,\ldots,n\}$ with less than $k$ elements there must be a set with $k$ elements in $\Ind(\F)$.
\end{proof}

As a corollary of Sauer-Perles-Shelah Lemma and Lemma \ref{suma} we obtain a variant of the Karpovsky-Milman Lemma \cite{KM}.

\begin{lemma}\label{last}
Let $X$ be a binary shift with positive topological entropy.
Then there is an $\eps>0$ such that for every $n\geq 1$ there is a set $J\subset \{1,\ldots,n\}$ with $\lfloor\eps n\rfloor$ elements which is an independence set for $X$.
\end{lemma}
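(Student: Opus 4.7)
The plan is to run the standard Karpovsky--Milman counting argument, feeding Lemma~\ref{suma} and Lemma~\ref{indep} with a lower bound on the size of the language $\Bl_n(X)$ that comes from positive topological entropy. Concretely, I will first choose $\eps$ so small that $H(\eps)$ is strictly less than $h(X)$, so that on the one hand $|\Bl_n(X)| \geq 2^{n h(X)}$ overwhelms $\sum_{j=0}^{\lfloor \eps n\rfloor}\binom{n}{j} \leq 2^{n H(\eps)}$, and on the other hand Lemma~\ref{indep} extracts an independence set of size $\lfloor \eps n\rfloor$ for the family $\Bl_n(X)$.

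More precisely, put $h:=h(X)>0$. Because Fekete's lemma gives
\[
h = \inf_{n\geq 1}\frac{1}{n}\log |\Bl_n(X)|,
\]
there is a uniform lower bound $|\Bl_n(X)|\geq 2^{nh}$ for every $n\geq 1$ (throughout, $\log$ is taken in base $2$, as demanded by Lemma~\ref{suma}). Since the binary entropy function $H$ is continuous on $[0,1/2]$ with $H(0)=0$, I fix once and for all some $\eps\in(0,1/2]$ with $H(\eps)<h$; this will be the $\eps$ the lemma asks for.

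Now take any $n\geq 1$ and set $k:=\lfloor\eps n\rfloor$. If $k=0$, the empty set serves as $J$ trivially. Otherwise $1\leq k\leq n$, and Lemma~\ref{suma} combined with the preceding inequalities yields
\[
\sum_{j=0}^{k-1}\binom{n}{j}\;\leq\;\sum_{j=0}^{\lfloor\eps n\rfloor}\binom{n}{j}\;\leq\;2^{nH(\eps)}\;<\;2^{nh}\;\leq\;|\Bl_n(X)|,
\]
so Lemma~\ref{indep} applied to $\F:=\Bl_n(X)$ produces a set $J\subset\{1,\ldots,n\}$ of cardinality $k=\lfloor\eps n\rfloor$ that is independent for $\Bl_n(X)$. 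To finish, I promote this to independence of $X$ over $J$: given any $\varphi\colon J\to\{0,1\}$, by choice of $J$ there is a block $w\in\Bl_n(X)$ with $w_i=\varphi(i)$ for all $i\in J$, and since $w$ belongs to the language of $X$ there exists $x\in X$ with $x_{[1,n]}=w$, which is the witness required by the definition of an independence set for $X$.

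I do not anticipate any real obstacle. The only subtle point is remembering that independence of the family $\Bl_n(X)$ over a subset of $\{1,\ldots,n\}$ translates directly into independence of the shift space $X$ itself, via the characterization $w\in\Bl_n(X)\iff\exists\,x\in X,\,x_{[1,n]}=w$. Everything else is the chain of inequalities above together with the correct choice of $\eps$ based on the continuity of $H$ at $0$.
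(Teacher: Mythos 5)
Your proposal is correct and follows essentially the same route as the paper: choose $\eps$ with $H(\eps)<h(X)$, combine Lemma~\ref{suma} with $|\Bl_n(X)|\ge 2^{nh(X)}$, and invoke Lemma~\ref{indep} on $\F=\Bl_n(X)$. Your extra care with the $k=0$ case and the explicit passage from independence of the block family $\Bl_n(X)$ to independence of the shift space $X$ are fine additions that the paper leaves implicit.
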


\begin{proof}
Take $0<\eps\le 1/2$ such that $0<H(\eps)<h(X)$, where $H(\eps)=-\eps\log \eps - (1-\eps)\log (1-\eps)$.
By Lemma \ref{suma} we have
\[
\sum_{j=0}^{\lfloor n \eps \rfloor}\binom{n}{j}\le 2^{n\cdot H(\eps)}<2^{nh(X)}\le |\Bl_n(X)|,
\]
for any $n\geq 1$.
It follows from Lemma \ref{indep} that there is a set of independence for $\Bl_n(X)$ with $\lfloor\eps n\rfloor$ elements.
\end{proof}

\subsection{Proof of Theorem \ref{main}}
\begin{proof}[Proof of Theorem \ref{main}]
Let $X$ be a binary shift.

Assume first that $X$ has positive entropy. Let $\lang_n$ be the collection of characteristic functions of sets of independence for $\Bl_n(X)$. We can treat each element of $\lang_n$ as a binary block; then $\lang=\bigcup_{n=1}^\infty\lang_n$ is a factorial and prolongable binary language. We denote the shift space it defines by $I_X$.

By compactness of the full shift and the fact that a point $x\in \Al^\N$ is in $X_\mathcal{L}$ if and only if $x_{[i,j]}\in\mathcal{L}$ for all $i,j\in\N$ with $ i < j$, elements of $I_X$ may be identified with characteristic functions of independence sets of $X$.

It follows from Lemma \ref{last} that $\adsymbol_1(I_X)>0$. Using Theorem \ref{thm:density} we may now fix an element of $I_X$ which is a characteristic function of $J$, an independence set for $X$, such that $\sh(J)=\ad(J)=\adsymbol_1(I_X)>0$.

If, on the other hand, $J$ is an independence set for $X$ such that $\sh(J)=\ad(J)>\delta>0$, then for every $n\in\N$ there are at least $n\delta$ elements of $J$ in $\{1,\ldots,n\}$, and therefore $|\Bl_n(X)|\geq 2^{n\delta}$ for every $n\in\N$. This implies that $h(X)\geq\delta$ and completes the proof.
\end{proof}

\begin{remark} It follows from the above proof of Theorem \ref{main} that for every binary shift $X$ one has
\[
h(X)\ge \adsymbol_1(I_X).
\]
One may wonder, whether the above inequality is, in fact, an equality.
It holds, for example, for the square-free flow $S$ (see \cite{Peckner}).
Unfortunately, this is not always true. Consider the golden mean shift $G$ (the shift defined by taking $\mathcal{F}=\{11\}$ as the set of forbidden blocks). Note that it is \emph{hereditary}\footnote{Hereditary shifts were introduced in \cite{KerrLi} and examined in \cite{Kwietniak}.}, that is, given a word in its language, one can replace any number of $1$'s by $0$'s and the resulting word will still belong to the language. Therefore $I_G=G$. It is known that $h(G)=\log((1+\sqrt{5})/2)$, but it is easy to see that $\adsymbol_1(I_G)=1/2$.
\end{remark}

\section{Entropy, Topological Mixing, Ergodic Measures, and Chaos}
In this section we construct an example of a shift space
 with the same properties as the symbolic system constructed in \cite{Weiss73} by Weiss.
We begin by defining inductively a sequence of shift spaces $X_k$ so that $X_0\subset X_1\subset\ldots$  and these inclusions are strict.
After completing the induction we define
\[
X=\overline{\bigcup_{n=0}^\infty X_n}.
\]
This means that
$\Bl(X)=\Bl(X_0)\cup \Bl(X_1)\cup \Bl(X_2)\cup\ldots$.
It will be clear that at each step of our inductive construction some words are added to $\Bl(X_n)$ to form $\Bl(X_{n+1})$.
We say that a block $u\in\{0,1\}^*$ with $||u||_1>0$ is \emph{added at step $n\in N$} if
$u\in \Bl(X_{n})\setminus \Bl(X_{n-1})$. Any block $u\in\Bl(X_0)$ with $||u||_1>0$ is \emph{added at step $0$} by definition.
Recall that $||w||_{1}$ denotes the number of $1$'s in a word $w$. A word $w$ is a~\emph{from-$1$-to-$1$} word if $w=1$, or $w=1v1$
for some $v\in\{0,1\}^*$. Let $Y$ be a shift space over $\{0,1\}$. Let $\G_n(Y)$ ($\G^*_n(Y)$) denote the set of all from-$1$-to-$1$ words in $Y$ with at most (exactly) $n$ occurrences of $1$.
The following two properties of blocks added at step $n$ are easy consequences of the definition of $X_n$ presented below. They will be proved 
along the lines of the definition.
\begin{lemma}\label{lem:one}
If a block $u$ is added at step $n\ge 1$, then there is a prefix of $u$ of the form
$0^{\alpha}\bar{u}'0^\beta\bar{u}''$ for some $\bar{u}',\bar{u}''\in\G_{2^{n-1}}(X_{n-1})$, $\alpha\ge 0$ and $\beta\ge 2^{2n-2}$.
It follows that:
\begin{enumerate}
\item \label{added1} $2^{n-1}< ||u||_1$;
\item \label{added2} $\displaystyle\frac{||u||_1}{|u|}\le\frac{2^{n}}{2^{n}+2^{2n-2}}= \frac{1}{1+2^{n-2}}$.
\end{enumerate}
\end{lemma}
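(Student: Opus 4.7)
The plan is to unpack the (forthcoming) inductive definition of $X_n$ and extract from it a normal form for any block $u \in \Bl(X_n) \setminus \Bl(X_{n-1})$. The conclusion of the lemma tells us in advance what this normal form should be: the step-$n$ rule augments $X_{n-1}$ by permitting concatenations $\bar u' 0^\beta \bar u''$ of two from-$1$-to-$1$ words $\bar u', \bar u'' \in \G_{2^{n-1}}(X_{n-1})$ separated by a gap of at least $2^{2n-2}$ zeros. The key observation I would exploit is that since $u \notin \Bl(X_{n-1})$, the obstruction must appear \emph{somewhere} in $u$; taking its leftmost occurrence yields a factorization of a prefix of $u$ as $0^\alpha \bar u' 0^\beta \bar u''$. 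The initial run $0^\alpha$ collects leading zeros (possibly $\alpha = 0$), $\bar u'$ is the first from-$1$-to-$1$ word encountered, and $\bar u''$ is the next one whose arrival completes the step-$n$ pattern; everything up to $\bar u'$ is already in $\Bl(X_{n-1})$, so novelty must enter exactly when $\bar u''$ appears, forcing $\beta \geq 2^{2n-2}$.

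For item (1), I would read off the construction that each of $\bar u', \bar u''$ must have enough $1$'s for the concatenation to be genuinely new — the natural interpretation being that both lie in $\G^*_{2^{n-1}}(X_{n-1})$, so together they contribute $||\bar u'||_1 + ||\bar u''||_1 = 2^n$ ones, whence $||u||_1 \geq 2^n > 2^{n-1}$. (If the definition only asks $\bar u', \bar u'' \in \G_{2^{n-1}}(X_{n-1})$, then the same count works once one observes that a new pattern at step $n$ cannot be absorbed into a single $\G_{2^{n-1}}(X_{n-1})$-piece, which forces at least one of $\bar u', \bar u''$ to saturate its bound.)

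For item (2), the essential inequality is
\[
|u| \geq ||u||_1 + 2^{2n-2},
\]
which holds because every symbol contributes at least $1$ to $|u|$ and the gap $0^\beta$ contributes an extra $\beta \geq 2^{2n-2}$ zeros beyond what the $1$'s account for. Using the monotonicity of $t \mapsto t/(t+c)$ in $t$ and the bound $||u||_1 \leq 2^n$ (to be justified from the step-$n$ rule) gives
\[
\frac{||u||_1}{|u|} \leq \frac{||u||_1}{||u||_1 + 2^{2n-2}} \leq \frac{2^n}{2^n + 2^{2n-2}} = \frac{1}{1+2^{n-2}}.
\]

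The main obstacle I foresee is ensuring the bound $||u||_1 \leq 2^n$ applies to an arbitrary step-$n$ block, not just to the prefix $0^\alpha \bar u' 0^\beta \bar u''$. If the construction iterates (so that $u$ may contain several $\G_{2^{n-1}}(X_{n-1})$-pieces in succession), the argument must be recast as a uniform density estimate: each additional $\G_{2^{n-1}}$-piece contributes at most $2^{n-1}$ ones while the step-$n$ rule forces at least $2^{2n-2}$ zeros between successive pieces, so the ratio $||u||_1/|u|$ is bounded segment-by-segment by $2^{n-1}/(2^{n-1} + 2^{2n-2}) = 1/(1+2^{n-1})$, which is sharper than the asserted bound. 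The claimed inequality $||u||_1/|u| \leq 1/(1+2^{n-2})$ then follows once one absorbs the slack of the initial and terminal pieces into the worst-case count.
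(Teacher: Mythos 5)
Your identification of the prefix $0^{\alpha}\bar u'0^{\beta}\bar u''$ and your treatment of item (2) follow essentially the paper's route: the paper likewise bounds the density by $(||\bar u'||_1+||\bar u''||_1)/(|\bar u'|+|\bar u''|+\beta)\le 2^{n}/(2^{n}+2^{2n-2})$, using only that each piece carries at most $2^{n-1}$ ones while the gap contributes at least $2^{2n-2}$ extra zeros. Your concern about blocks built from many pieces is legitimate — for long blocks of the periodic auxiliary points $(v0^{k})^{\infty}$ the bound $||u||_1\le 2^{n}$ is simply false — and the segment-by-segment estimate you sketch is the right repair, since every additional piece arrives together with its own gap of length at least $2^{2n-2}$, so the two-piece configuration is the extremal one.

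The genuine gap is in item (1). The construction glues together from-$1$-to-$1$ words with \emph{at most} $2^{n-1}$ ones, so your primary argument ($\bar u',\bar u''\in\G^{*}_{2^{n-1}}(X_{n-1})$, hence $||u||_1\ge 2^{n}$) misreads the rule, and its conclusion is false: the block $1\,0^{4}\,101$ is added at step $2$ (pieces $1$ and $101$, gap $4=2^{2}$) yet has only $3<2^{2}$ ones. The fallback ("at least one of the two pieces must saturate its bound") is likewise unjustified: nothing in the construction forces either piece to carry exactly $2^{n-1}$ ones, and no counting of ones inside the two displayed pieces can yield (1), since those pieces need not contain all the ones of $u$ nor any prescribed number of them. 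The paper's argument is of a completely different nature and does not inspect the prefix at all: if $||u||_1\le 2^{n-1}$, then $u$ is a configuration with at most $2^{n-1}$ ones occurring in $X_{n}$, and every such configuration is already realized in $X_{n-1}$, so $u$ would have been added at an earlier step; the inequality $2^{n-1}<||u||_1$ is thus a consequence of the minimality of the step at which $u$ is added. Any correct proof of (1) has to pass through this "otherwise $u$ appears earlier" observation rather than through the anatomy of $\bar u'$ and $\bar u''$.
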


\subsection{Construction of $X$} We inductively construct an increasing sequence of shift spaces $X_0\subset X_1\subset\ldots$ and then define $X$ as the closure of the union of all $X_n$'s.

\subsubsection*{Outline of the construction}
Recall that a shift space $X$ is mixing if for every blocks $u,v\in\Bl(X)$ there is an $N\in \N$ such that for each $n\ge N$ one can find a block $w$ of length $n$ such that $uwv\in\Bl(X)$. We call such a word $w$ the \emph{$n$-transition block} from $u$ to $v$. The periodic points are dense in a shift space $X$ is for every block $u\in\Bl(X)$ there is a block $v$ such that $(uv)^\infty\in X$. If $x\in X$ is a periodic point, then the \emph{prime period} of $x$ is the length of the shortest block $w\in \Bl(X)$ satisfying $x=w^\infty$.

In our construction  the $n$-transition block can be always a block of $0$'s. The length $N$ depends on the number of $1$'s which occur in $u$ and $v$, namely we choose $N$ such that the relative density of $1$'s in $u0^Nv$ is small. The periodic points are dense in our shift space but the density of $1$'s in a periodic point decreases
very fast when the prime period of the point grows. We achieve this by adding periodic points of the form $x=(u0^n)^\infty$ with $n$ large enough to force the relative frequency of $1$'s in $x$ to be small.


\subsubsection*{Definition of $X_{0}$}
We define $X_0$ to be the set of all sequences in the full shift over $\{0,1\}$ with at most one appearance of the symbol $1$. That is,
$X_0=\{0^\infty\}\cup\{0^\alpha10^\infty:\alpha\ge 0\}$. %
Note that
\[
\Bl(X_0)=\{0^\alpha:\alpha\ge 0\}\cup \{0^\alpha10^\beta:\alpha,\beta\ge 0\}.
\]
Before we present the details in full generality let us first work out the special case of $X_1$.

\subsubsection*{Definition of $X_{1}$}
We add to $X_0$ the orbits of points of the form:
\[
(10^\beta)^\infty, \quad\text{or}\quad 0^\alpha10^\beta10^\infty, \text{ where }\beta\ge 1,\,\alpha\ge 0.
\]
Note that every block $u$ added at step $1$ has at least two occurrences of the symbol $1$, and fulfills
\[
||u||_1/|u|\le \frac{2}{3}.
\]
The bound above is the best possible, since it is attained by the block $101$ added at step $1$.
Therefore all assertions of Lemma \ref{lem:one} hold.

\subsubsection*{Definition of $X_{n+1}$}
For the inductive step, given $X_n$ and $n\ge 0$, we construct a~shift space $Y_{n+1}$ and set $X_{n+1}=X_n\cup Y_{n+1}$.
To define $Y_{n+1}$ we specify a set of auxiliary points and then let $Y_{n+1}$ be the closure
of the set of auxiliary points. 

There will be two types of auxiliary points. We call them \emph{periodic} and \emph{joining} auxiliary points.
Let $P'_{n+1}$ be the set of all periodic points
of the form $(u0^{k})^\infty$, where $k\ge 2^{2n}$ and $u\in\G_{2^n}(X_n)$.
A point $x$ is a periodic auxiliary point if it belongs to the orbit of some point in $P'_{n+1}$. We denote the set of auxiliary periodic points
by $P_{n+1}$. We have
\[
P_{n+1}=\bigcup_{l=0}^\infty\sigma^l(\{(u0^{k})^\infty:k\ge 2^{2n},\,u\in\G_{2^n}(X_n) \}).
\]
It is clear that $P_{n+1}$ is shift invariant.
The set of joining auxiliary points is given by
$J_{n+1}=J'_{n+1}\cup J''_{n+1}$,
where
\begin{align*}
J'_{n+1}&=\{0^\alpha u0^\infty:\alpha\ge 0,\,u\in\G_{2^n}(X_n)\},\\
J''_{n+1}&=\{0^\alpha u0^{\beta}v0^\infty:\alpha\ge 0,\,\beta\ge 2^{2n},\,u,v\in\G_{2^n}(X_n)\}.
\end{align*}
Points from $J''_{n+1}$ guarantee that the mixing condition holds for pairs of blocks $u, v$ with at most
$2^n$ occurrences of $1$. But $J''_{n+1}$ is not shift invariant and orbits of points from $J''_{n+1}$ end up in $J'_{n+1}\cup\{0^\infty\}$
after a finite number of shift operations.

We claim that
\begin{align}
\overline{P}_{n+1}&\subset P_{n+1}\cup \overline{J}_{n+1}, \label{closures2}\\
\overline{J}_{n+1}&=J_{n+1}\cup\{0^\infty\}. \label{closures1}
\end{align}
For the proof of the inclusion in \eqref{closures2} assume that a~sequence $\{x^{(m)}\}_{m=1}^\infty\subset P_{n+1}$ converges to $x$.
Without loss of generality we can assume that
\[
x^{(m)}=u^{(m)}_{[t(m),\ell(m)]}\left(u^{(m)}0^{\beta_m}\right)^{\infty}
\]
where $u^{(m)}\in\G_{2^n}(X_n)$ is a block of length $\ell(m)$, $1\le t(m)\le ell(m)$ and $\beta_m\geq 2^{2m} $.
We also assume that $u^{(m)}0^{\beta_m}$ is primitive, that is, it is not a concatenation of two or more copies of some block over $\{0,1\}$.

There are two possibilities:

\textbf{Case 1: $\G^*_{2^{n+1}}(x)\neq \emptyset$}. Let $N$ be the smallest integer such that $x_{[1,N]}\in\G^*_{2^{n+1}}(x)$. But $x^{(m)}\to x$ as $m\to\infty$, thus for all sufficiently large $m$ we have $x^{(m)}_{[1,N]}=x_{[1,N]}$. In particular, $x_{[1,N]}\in \Bl \left(x^{(m)}\right)$. Consequently, $u^{(m)}0^{\beta_m}$ is a subblock of $x_{[1,N]}$. There are only finitely many subblock of $x_{[1,N]}$ of the form $u0^k$ where $k\ge 2^m$ and $u\in\G_{2^n}(X_n)$. Therefore there are only finitely many $y\in P_{n+1}$ such that $\G^*_{2^{n+1}}(y)$ contains $x_{[1,N]}$. It follows that $x^{(m)}$ is eventually constant, hence $x\in P_{n+1}$.

\textbf{Case 2: $\G^*_{2^{n+1}}(x)= \emptyset$}. It is easy to see that in this case $x\in \overline{J}_{n+1}$.

The proof of \eqref{closures1} follows the same lines as the proof above. We leave the details to the reader.

We set \[Y_{n+1}=\overline{P_{n+1}\cup J_{n+1}}\;\;\text{and} \;\;X_{n+1}=X_n\cup Y_{n+1}.\] Clearly, $X_n\subset X_{n+1}$.  This completes the induction step. Observe that any block $u$ added at step $n+1$ has a prefix
$0^{\alpha}\bar{u}'0^\beta\bar{u}''$ for some $\bar{u}',\bar{u}''\in\G_{2^{n}}(X_{n})$, $\alpha\ge 0$, $\beta\ge 2^{2n}$. Furthermore, $u$ fulfills $||u||_1>2^n$ as otherwise $u$ would be added at earlier step. Therefore
\[
\frac{||u||_1}{|u|}\le \frac{||u'||_1+||u''||_1}{|u'|+|u''|+{\beta}}.
\]
But $||u'||_1$ and $||u''||_1$ are bounded by $2^n$, and we conclude that
\[
\frac{||u||_1}{|u|}\le \frac{2^{n+1}}{2^{n+1}+2^{2n}}=\frac{1}{1+2^{n-1}},
\]
as claimed in Lemma \ref{lem:one}.
We note the following consequence of the formulas  \eqref{closures1} and \eqref{closures2}.
\begin{corollary}
 If $x\in X_n$ and the set $\{j\in\N:x_j=1\}$  is infinite, then $x$ must be a periodic point.
\end{corollary}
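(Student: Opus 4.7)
The plan is to prove the statement by induction on $n$, exploiting the decomposition $X_{n+1} = X_n \cup Y_{n+1}$ together with the two closure identities \eqref{closures2} and \eqref{closures1}. In the base case $n = 0$, every element of $X_0$ is of the form $0^\infty$ or $0^\alpha 1 0^\infty$, so the set of positions where the symbol $1$ appears is either empty or a singleton; hence the implication is vacuous.

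For the inductive step, I would fix $n \ge 0$, assume the corollary for $X_n$, and take any $x \in X_{n+1}$ with infinitely many occurrences of $1$. If $x \in X_n$, the inductive hypothesis gives at once that $x$ is periodic. Otherwise $x$ lies in $Y_{n+1} = \overline{P_{n+1} \cup J_{n+1}} = \overline{P_{n+1}} \cup \overline{J_{n+1}}$. The two identities established just before the statement of the corollary give
\[
Y_{n+1} \;\subset\; P_{n+1} \cup \overline{J_{n+1}} \;=\; P_{n+1} \cup J_{n+1} \cup \{0^\infty\}.
\]
It then suffices to rule out the last two alternatives: every element of $J'_{n+1}$ and $J''_{n+1}$ has the tail $0^\infty$ by construction, so contains only finitely many $1$'s, and of course $0^\infty$ contains none at all. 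Since $x$ has infinitely many $1$'s, we must have $x \in P_{n+1}$, and by definition $P_{n+1}$ consists of shifts of points of the form $(u 0^k)^\infty$, hence $x$ is periodic.

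I do not expect a genuine obstacle here: the entire argument is structural and relies only on the two closure statements and the explicit description of $J_{n+1}$ and $P_{n+1}$. The one subtle point to keep clearly in mind is that $\overline{P_{n+1}\cup J_{n+1}}$ need not equal the union of the closures in general, but we do not need equality; only the inclusion $\overline{A\cup B}\subset \overline{A}\cup \overline{B}$ (which is an identity) combined with \eqref{closures2} and \eqref{closures1}, and this is exactly what makes the case analysis above exhaustive.
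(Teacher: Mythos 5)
Your proof is correct and is essentially the argument the paper intends: the corollary is stated there as an immediate consequence of \eqref{closures2} and \eqref{closures1}, and your induction on $n$ simply spells out that these formulas confine any point of $Y_{n+1}$ with infinitely many $1$'s to the (periodic) set $P_{n+1}$, since every point of $J_{n+1}\cup\{0^\infty\}$ ends in $0^\infty$. (The only cosmetic remark: for two sets the closure of the union actually \emph{equals} the union of the closures, so the caution in your last paragraph is unnecessary, though harmless.)
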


\subsection{Properties of $X$}
We describe the properties of $X$ as a sequence of lemmas.
\begin{lemma}
Periodic points are dense in $X$.
\end{lemma}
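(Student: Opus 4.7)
The plan is to verify the standard criterion for density of periodic points in a shift space: every basic open cylinder $[u]_X$ with $u\in\Bl(X)$ contains a periodic point. Since $\Bl(X)=\bigcup_{n\ge 0}\Bl(X_n)$, I would fix an arbitrary $n$ and $u\in\Bl(X_n)$ and exhibit a periodic $x\in X$ satisfying $x_{[1,|u|]}=u$.

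If $u$ contains no $1$'s, the fixed point $0^\infty\in X_0\subset X$ already works. Otherwise one can uniquely write $u=0^\alpha\tilde{u}0^\gamma$, where $\tilde{u}$ is the from-$1$-to-$1$ subword of $u$ obtained by stripping the leading and trailing runs of $0$'s and $\alpha,\gamma\ge 0$. Because $\tilde{u}$ is a subblock of $u\in\Bl(X_n)$ and $X_n\subset X_N$ for all $N\ge n$, we have $\tilde{u}\in\Bl(X_N)$ for every such $N$, so picking $N\ge n$ large enough that $||\tilde{u}||_1\le 2^N$ gives $\tilde{u}\in\G_{2^N}(X_N)$. I will then fix any integer $k\ge\max(2^{2N},\alpha,\gamma)$ and consider the periodic point $y=(\tilde{u}0^k)^\infty$. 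By the very definition of $P'_{N+1}$ we have $y\in P'_{N+1}\subset P_{N+1}\subset Y_{N+1}\subset X_{N+1}\subset X$, so $y$ is a periodic point of $X$ of period dividing $|\tilde{u}|+k$.

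Finally, since $P_{N+1}$ is explicitly shift-invariant by construction (it is defined as a union of $\sigma^l$-images of the set $\{(v0^k)^\infty:k\ge 2^{2N},\,v\in\G_{2^N}(X_N)\}$), the shifted point $x=\sigma^{|\tilde{u}|+k-\alpha}(y)$ still lies in $P_{N+1}\subset X$ and remains periodic with the same period as $y$. A direct inspection yields $x=0^\alpha\tilde{u}0^k\tilde{u}0^k\ldots$, whence $x_{[1,|u|]}=0^\alpha\tilde{u}0^\gamma=u$, as required. The proof presents no substantial obstacle: the construction is essentially forced by the definitions, and the only subtleties are the observation that the periodic auxiliary points of $P_{N+1}$ sit inside $X$ already (no approximation argument is needed to produce the periodic witness), together with the need to take $k\ge\alpha$ so that the shift exponent $|\tilde{u}|+k-\alpha$ is non-negative, and $k\ge\gamma$ so that the trailing run $0^\gamma$ of $u$ fits inside the gap $0^k$.
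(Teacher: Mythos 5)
Your proof is correct and follows essentially the same route as the paper's: strip $u$ down to its from-$1$-to-$1$ core $\tilde u$, place it in $\G_{2^N}(X_N)$ for suitable $N$, and use a periodic auxiliary point of $P_{N+1}$ with a long enough gap of zeros. The paper merely packages the final step differently, choosing the gap to be $\alpha+\beta+2^{2m}$ so that the periodic point $(u0^{2^{2m}})^\infty$ begins with $u$ outright, whereas you apply an explicit shift; the two are interchangeable.
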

\begin{proof}
Take any $u\in\Bl(X)$. Assume that $u$ is added at step $n$ and let $k=||u||_1$.
If $n=k=0$, then $0^\infty$ is a periodic point in the cylinder of $u$. If $k>0$, then we write $u=0^\alpha \bar{u} 0^\beta$ for some $\bar{u}\in\G_k(X)$, $\alpha,\beta\ge 0$. We set $m=\lceil\log_2 k\rceil$ and note that $m\ge n$. Hence
$\bar{u}\in \G_k(X_n)\subset \G_{2^m}(X_m)$. We may infer that
\[
0^\alpha \left(\bar{u} 0^{\alpha+\beta+2^{2m}}\right)^\infty = (u0^{2^{2m}})^\infty\in P_{m+1},
\]
and therefore there is a periodic point of $X$ in the cylinder of $u$.
\end{proof}
\begin{lemma}
The shift space $X$ is topologically mixing.
\end{lemma}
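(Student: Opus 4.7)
The plan is to show that for any $u,v\in\Bl(X)$ the concatenation $u0^n v$ lies in $\Bl(X)$ for all sufficiently large $n$; the transition words can always be chosen to be strings of zeros, and this is enough for mixing.

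First I would reduce to a common stage. Since $\Bl(X)=\bigcup_{k\ge 0}\Bl(X_k)$, I can pick a single $m\ge 1$ large enough so that $u,v\in\Bl(X_m)$ \emph{and} $\max(\|u\|_1,\|v\|_1)\le 2^m$; the second requirement can always be met by further increasing $m$. Next I would peel off leading and trailing zeros: write $u=0^{\alpha_1}u'0^{\beta_1}$ and $v=0^{\alpha_2}v'0^{\beta_2}$, where $u'$ (resp.\ $v'$) is a from-$1$-to-$1$ word whenever $u$ (resp.\ $v$) contains a $1$. In that generic case $u',v'\in\G_{2^m}(X_m)$ by the choice of $m$.

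Now set $N=2^{2m}$. For any $n\ge N$ we have $\beta_1+n+\alpha_2\ge 2^{2m}$, and therefore
\[
u\,0^n\,v \;=\; 0^{\alpha_1}\,u'\,0^{\beta_1+n+\alpha_2}\,v'\,0^{\beta_2}
\]
is a prefix of the point $0^{\alpha_1}u'0^{\beta_1+n+\alpha_2}v'0^{\infty}$, which matches the defining form of $J''_{m+1}$. Hence $u0^n v\in \Bl(Y_{m+1})\subset\Bl(X_{m+1})\subset\Bl(X)$, establishing mixing with $w=0^n$.

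The only nuisance is the degenerate cases where $u$ or $v$ consists entirely of zeros. If both are, then $u0^n v=0^{|u|+n+|v|}\in\Bl(X_0)$ trivially. If only one of them is, say $u=0^{|u|}$, then for any $n\ge 0$ the block $u0^n v=0^{|u|+n+\alpha_2}v'0^{\beta_2}$ is a prefix of a point in $J'_{m+1}\subset Y_{m+1}$, so again $u0^n v\in\Bl(X)$. Thus taking $N=2^{2m}$ in all cases works. I do not expect any real obstacle here; the entire argument is driven by the fact that the construction of $Y_{n+1}$ was designed precisely so that long blocks of zeros can glue together any two words from earlier generations.
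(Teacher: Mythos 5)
Your proof is correct and follows essentially the same route as the paper: strip the leading and trailing zeros to write $u$ and $v$ around from-$1$-to-$1$ cores in some common $\G_{2^m}(X_m)$, and then observe that the joining points $J''_{m+1}$ (resp.\ $J'_{m+1}$ in the degenerate all-zero cases) make $u0^nv$ an allowed block for every $n\ge 2^{2m}$. Your write-up is in fact somewhat more explicit than the paper's about the choice of $m$ and the uniform threshold $N=2^{2m}$.
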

\begin{proof}
Let $u$ and $v$ be two nonempty blocks in $\Bl(X)$. Assume that $||u||_1>0$ and $||v||_1>0$. Consequently,
there is a~$k\ge 0$ such that $u=0^a\bar{u} 0^b$ and $v=0^c\bar{v}0^d$ for some $a,b,c,d\ge 0$ and $\bar{u},\bar{v}\in\G_{2^k}(X_k)$. Therefore $u0^{\beta}v0^\infty\in J_{n+1}\subset X$ for all $\beta\ge 2^{2k}$. If for some $\alpha\in\N$ we have $u=0^\alpha$ or $v=0^\alpha$, then our reasoning is similar. Therefore $X$ is topologically mixing.
\end{proof}

\begin{lemma}\label{lem:periodic-or-zero1}
If $x\in X$, then either $x$ is periodic or
the asymptotic density of the set $\chi_1(x)=\{j\in\N: x_j=1\}$ equals zero.
\end{lemma}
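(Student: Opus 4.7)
My plan is to split according to whether $x$ lies in $\bigcup_{n\ge 0} X_n$ or only in its closure $X$. Suppose first that $x\in X_n$ for some $n\ge 0$. If $\chi_1(x)$ is finite then its asymptotic density is trivially zero, and we are done. Otherwise $\chi_1(x)$ is infinite, and the Corollary stated immediately above the lemma forces $x$ to be periodic.

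So assume $x\in X\setminus\bigcup_{n\ge 0}X_n$. I want to show that $\lim_{N\to\infty}\frac{1}{N}||x_{[1,N]}||_1=0$. The key observation is that for every fixed $n$, all sufficiently long prefixes of $x$ must be blocks added at some step strictly greater than $n$. Indeed, since $x\notin X_n$ there is some initial prefix $x_{[1,N_n]}\notin\Bl(X_n)$, and because the language $\Bl(X_n)$ is factorial, every longer prefix $x_{[1,N]}$ with $N\ge N_n$ also lies outside $\Bl(X_n)$, hence is added at some step $m\ge n+1$.

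Now I would apply Lemma \ref{lem:one}(\ref{added2}), which bounds the relative frequency of the symbol $1$ in any block added at step $m\ge 1$ by $1/(1+2^{m-2})$. For $N\ge N_n$ this gives
\[
\frac{1}{N}||x_{[1,N]}||_1 \;\le\; \frac{1}{1+2^{m-2}} \;\le\; \frac{1}{1+2^{n-1}}.
\]
Hence $\limsup_{N\to\infty}\frac{1}{N}||x_{[1,N]}||_1\le 1/(1+2^{n-1})$ for every $n\ge 0$; letting $n\to\infty$ forces the limit to be $0$, so $\chi_1(x)$ has asymptotic density zero.

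I do not expect a serious obstacle here: all the heavy lifting is packaged in Lemma \ref{lem:one}(\ref{added2}) and in the Corollary, and the factorial property invoked in the second paragraph is immediate from each $X_n$ being a shift space. The only subtle point is the observation that once a prefix of $x$ leaves $\Bl(X_n)$ it cannot re-enter at greater length, which makes the step index $m(N)$ of $x_{[1,N]}$ tend to infinity with $N$ whenever $x\notin\bigcup_n X_n$.
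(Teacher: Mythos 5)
Your argument is correct and follows essentially the same route as the paper: the paper tracks the nondecreasing sequence $\lambda_N$ of steps at which the prefixes $x_{[1,N]}$ are added and splits on whether it is bounded (giving $x\in X_m$, hence periodic when $\chi_1(x)$ is infinite) or tends to infinity (giving density zero via Lemma~\ref{lem:one}(\ref{added2})), which is exactly your dichotomy $x\in\bigcup_n X_n$ versus $x\notin\bigcup_n X_n$. Your explicit justification via factoriality that the step index of the prefixes tends to infinity is a slightly more detailed rendering of the same point.
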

\begin{proof}
Assume that the set $\chi_1(x)$ is infinite.
Denote by $\lambda_n$ the step at which $x_{[1,n]}$ was added. The sequence $\lambda_n$ is nondecreasing, hence it is either bounded, or $\lambda_n\nearrow \infty$. In the former case $x\in X_m$ for some $m\in\N$, hence $x$ must be periodic. In the later case we have
\[
\frac{\left|\{1\le j\le n: x_j=1\}\right|}{n}\le \frac{1}{1+2^{\lambda_n-2}},
\]
which monotonically tends to zero as $n$ goes to infinity by Lemma \ref{lem:one}.
\end{proof}

\begin{lemma}\label{lem:all-generic-only-periodic-ergodic}
Every point $x\in X$ is a generic point for some $\sigma$-invariant ergodic measure $\mu_x$ on $X$
and all ergodic invariant measures of $(X,\sigma)$ are supported on single periodic points.
\end{lemma}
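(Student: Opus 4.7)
My plan is to split the argument according to whether $x\in X$ is periodic or not, and in each case identify $\mu_x$ explicitly; the classification of ergodic measures will then follow from the Birkhoff ergodic theorem applied to any ergodic $\mu$.

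The periodic case is essentially automatic. If $x$ has prime period $p$, then the Ces\`aro averages $\frac{1}{n}\sum_{j=0}^{n-1}\delta_{\sigma^j(x)}$ are eventually periodic in $n$ and converge weak-$*$ to the uniform probability measure on the finite orbit $\{x,\sigma(x),\ldots,\sigma^{p-1}(x)\}$. This measure is $\sigma$-invariant, ergodic (every orbit-supported measure on a periodic orbit is), and trivially supported on a single periodic orbit, so $x$ is generic and the conclusion holds.

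The non-periodic case is where the work lies, and is the step I expect to be the main obstacle. Here I would exploit Lemma \ref{lem:periodic-or-zero1}, which guarantees $\ad(\chi_1(x))=0$, and I claim $x$ is generic for the Dirac mass $\delta_{0^\infty}$ at the fixed point $0^\infty\in X$. Since the cylinders form a basis of the topology, it suffices to control visit frequencies to each cylinder $[w]$ for $w\in\Bl_k(X)$. If $\|w\|_1>0$, pick any coordinate $i_0$ with $w_{i_0}=1$; then $\sigma^j(x)\in[w]$ forces $x_{j+i_0}=1$, so the visit frequency to $[w]$ is bounded above by $\ad(\chi_1(x))=0$. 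If $w=0^k$, the complementary estimate (failure of $\sigma^j(x)\in[0^k]$ forces $x_{j+i}=1$ for some $1\le i\le k$, of which there are at most $k\cdot|\chi_1(x)\cap\{1,\ldots,n+k\}|=o(n)$) shows the visit frequency tends to $1$. Combining these, the empirical measures of $x$ converge weak-$*$ to $\delta_{0^\infty}$, which is $\sigma$-invariant, ergodic, and supported on the single periodic orbit $\{0^\infty\}$.

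For the second assertion, let $\mu$ be any ergodic $\sigma$-invariant measure on $X$. By the Birkhoff ergodic theorem the set of generic points for $\mu$ has $\mu$-measure $1$, hence is nonempty; pick any such $y$. The first part identifies $\mu=\mu_y$ as either the uniform measure on the periodic orbit of $y$ (when $y$ is periodic) or as $\delta_{0^\infty}$ (when $y$ is not), and in either case $\mu$ is supported on a single periodic orbit, completing the argument.
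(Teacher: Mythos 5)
Your proposal is correct and follows essentially the same route as the paper: split by periodicity, use Lemma~\ref{lem:periodic-or-zero1} to show that every non-periodic point is generic for the Dirac measure at $0^\infty$, and deduce the classification of ergodic measures from the existence of generic points for any ergodic measure. You merely make explicit (via the cylinder-by-cylinder frequency estimates and the appeal to Birkhoff) the details that the paper's terser proof leaves implicit.
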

\begin{proof}If $x$ is a periodic point, then it is clear that $x$ is generic.
If $x$ is not a~periodic point, then for each $k\in\N$ the asymptotic density of the set $\chi^k_1(x)=\{j\in\N: x_{[j,j+k)}=0^k\}$ equals one. It follows
that $x$ is generic for the Dirac measure concentrated on the fixed point $0^\infty$. It follows that there are no other ergodic invariant measures.
\end{proof}


\begin{lemma}\label{lem:noDC3}
The shift space $X$ has no DC$3$ pair.
\end{lemma}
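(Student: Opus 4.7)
The plan is to prove the stronger statement that for every pair $(x,y)\in X\times X$ and every $t>0$ the ordinary density $F_{xy}(t)=F^*_{xy}(t)$ exists; this immediately precludes any DC$3$ pair. From \eqref{eq:rho} the event $\rho(\sigma^j x,\sigma^j y)<t$ is a cylinder condition on the orbit of $(x,y)$ in the product shift $X\times X$, so it suffices to show that for every clopen set of the form $U\times V\subset X\times X$ the density $d(A_{U,V})$ of $A_{U,V}=\{j\ge 0:\sigma^j x\in U,\ \sigma^j y\in V\}$ exists.

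The key structural input is Lemma \ref{lem:all-generic-only-periodic-ergodic}: every $x\in X$ is generic for an ergodic $\sigma$-invariant measure $\mu_x$, which is either the uniform measure on a periodic orbit (when $x$ itself is periodic) or the Dirac mass $\delta_{0^\infty}$ (when $x$ is non-periodic, by Lemma \ref{lem:periodic-or-zero1}). I would split into three cases. In case (a) both $x$ and $y$ are periodic, so $(x,y)$ is periodic under $\sigma\times\sigma$ and every visit frequency is well-defined periodically. In case (b) both $x$ and $y$ are non-periodic: both are generic for $\delta_{0^\infty}$, hence $\{j:\sigma^j x\in U\}$ has density $1$ if $0^\infty\in U$ and density $0$ otherwise (likewise for $y$ and $V$), and a one-line intersection argument yields $d(A_{U,V})=1$ if $0^\infty\in U\cap V$ and $d(A_{U,V})=0$ otherwise.

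The main case is (c), say $x$ periodic with period $p$ and $y$ non-periodic. Then $\{j:\sigma^j x\in U\}$ has rational density $\mu_x(U)$, while $\{j:\sigma^j y\in V\}$ has density $1$ if $0^\infty\in V$ and density $0$ otherwise. When $0^\infty\notin V$ we immediately get $d(A_{U,V})=0$. When $0^\infty\in V$, the containment $A_{U,V}\subset\{j:\sigma^j x\in U\}$ combined with the density-zero bound $\{j:\sigma^j x\in U\}\setminus A_{U,V}\subset\{j:\sigma^j y\notin V\}$ gives $d(A_{U,V})=\mu_x(U)$.

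Combining the three cases we obtain that $d(A_{U,V})$ exists for every clopen $U\times V\subset X\times X$, so $F_{xy}(t)=F^*_{xy}(t)$ for every $t>0$ and $X$ has no DC$3$ pair. The main obstacle is case (c), where one must carefully invoke Lemma \ref{lem:periodic-or-zero1} to verify that the non-periodic coordinate contributes only through cylinders containing $0^\infty$; the other two cases reduce to elementary observations.
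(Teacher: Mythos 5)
Your proof is correct and follows essentially the same route as the paper: both establish the stronger fact that $F_{xy}=F^*_{xy}$ for all $t$ by showing the relevant visit sets have well-defined asymptotic densities, using that every point of $X$ is generic for either a periodic-orbit measure or $\delta_{0^\infty}$ (Lemmas \ref{lem:periodic-or-zero1} and \ref{lem:all-generic-only-periodic-ergodic}). The paper works directly with the sets $\Equal^k(x,y)=\{j:x_{[j,j+k)}=y_{[j,j+k)}\}$ rather than decomposing into products of cylinders, but the case analysis and the key inputs are the same.
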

\begin{proof}
Fix $k\in N$ and take any $x,y\in X$.
We define
\[
\Equal^k(x,y)=\{j\in\mathbb{N}:x_{[j,j+k)}= y_{[j,j+k)}\}.
\]
It is easy to see that in the case of symbolic dynamical systems equipped in the metric $\rho$ defined by
\eqref{eq:rho} for each $k\in \N$ and $t\in (2^{-k},2^{-k+1}]$ we have
\begin{align}\label{eq:symbolic_distribution}
F_{xy}(t)&=\ld(\Equal^k(x,y))&\text{and}&& F^*_{xy}(t)&=\ud(\Equal^k(x,y)).
\end{align}
We will show that $\Equal^k(x,y)$ has a well defined asymptotic density.
Let $u_j=x_{[j,j+k)}$ and $v_j=y_{[j,j+k)}$ for $j\in\N$.
If $x$ and $y$ are periodic, then obviously $\Equal^k(x,y)$ has an asymptotic density for each $k$.
For the remaining case we assume, without loss of generality, that $x$ is generic for the Dirac mass at $0^\infty$.
Observe that
\begin{align*}
\Equal^k(x,y)&=\{j\in\mathbb{N}:u_j=v_j\}\\
&=\{j\in\mathbb{N}:u_j=0^k=v_j\}\cup \{j\in\mathbb{N}:u_j= v_j\neq 0^k\}.
\end{align*}
The set $\{j\in\mathbb{N}:u_j= v_j\neq 0^k\}$ is contained in $\{j\in\mathbb{N}:u_j\neq 0^k\}$, hence has asymptotic density zero. We also know that $d(\{j\in\mathbb{N}:u_j= 0^k\})=1$ and $d(\{j\in\mathbb{N}:v_j=0^k\})$ exists, therefore $\Equal^k(x,y)$ has a well defined asymptotic density.
Since $k$ was arbitrary, we apply \eqref{eq:symbolic_distribution} to conclude that $F_{xy}(t)=F^*_{xy}(t)$ for each $t\in\R$.
\end{proof}

The following result may be proved directly, or using a result of Downarowicz \cite{Downarowicz}, who showed that any positive entropy system has many DC$2$-pairs.
\begin{lemma}\label{cor:zero-entropy}
The shift space $X$ has topological entropy zero.
\end{lemma}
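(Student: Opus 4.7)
The plan is to deduce Lemma~\ref{cor:zero-entropy} from Lemma~\ref{lem:noDC3} by contradiction, invoking the theorem of Downarowicz~\cite{Downarowicz} that every dynamical system with positive topological entropy admits an uncountable DC$2$-scrambled set and, in particular, contains at least one DC$2$-pair. So first I would suppose $h(X)>0$ and apply Downarowicz's theorem to produce a pair $(x,y)\in X\times X$ with $F^*_{xy}(t)=1$ for every $t>0$ and $F_{xy}(s)<1$ for some $s>0$.

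The key step is the elementary observation that every DC$2$-pair is automatically a DC$3$-pair. Indeed, by monotonicity of the lower distribution function, $F_{xy}(s')\le F_{xy}(s)<1=F^*_{xy}(s')$ for every $0<s'\le s$, so $F_{xy}<F^*_{xy}$ on the interval $(0,s]$ of positive length. This contradicts Lemma~\ref{lem:noDC3} and forces $h(X)=0$, finishing the argument.

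For completeness, I would also sketch the direct route hinted at in the paper. By Lemma~\ref{lem:one}(\ref{added2}), every block $w\in\Bl_n(X)$ added at step $k\ge 1$ satisfies $||w||_1/n\le 1/(1+2^{k-2})$, while by Lemma~\ref{lem:one}(\ref{added1}) one has $||w||_1>2^{k-1}$. Splitting $\Bl_n(X)$ according to the step $\lambda_n$ at which a block is added and counting separately, one sees that for large $\lambda_n$ the density of $1$'s is $o(1)$ as $n\to\infty$, so the number of admissible positions for the $1$'s contributes only $e^{o(n)}$ via the usual binomial estimate (Lemma~\ref{suma}); while for each fixed upper bound on $\lambda_n$ the corresponding blocks are built from finitely many short ``atoms'' of $\G_{2^{k}}(X_k)$ interleaved with long runs of $0$'s, and hence grow only polynomially in $n$. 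The main obstacle in this direct approach is the bookkeeping needed to glue these slice-by-slice estimates into a uniform subexponential bound on $|\Bl_n(X)|$; this is precisely the inconvenience the Downarowicz shortcut avoids, which is why I would present the contradiction argument as the main proof.
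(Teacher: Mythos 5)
Your argument is correct, but it follows a genuinely different route from the one the paper actually takes. The paper's proof is a two-line application of the variational principle: by Lemma~\ref{lem:all-generic-only-periodic-ergodic} every ergodic invariant measure of $(X,\sigma)$ is concentrated on a single periodic orbit, such measures have measure-theoretic entropy zero, and hence the supremum of the measure-theoretic entropies over ergodic measures is zero, giving $h(X)=0$. Your proof instead carries out the alternative that the authors only mention in the sentence preceding the lemma: combine Downarowicz's theorem \cite{Downarowicz} (positive topological entropy yields DC$2$-pairs) with Lemma~\ref{lem:noDC3}, via the elementary and correct observation that every DC$2$-pair is a DC$3$-pair (monotonicity of $F_{xy}$ gives $F_{xy}(s')\le F_{xy}(s)<1=F^*_{xy}(s')$ for all $s'\in(0,s]$). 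There is no circularity, since Lemma~\ref{lem:noDC3} is established independently of entropy. The trade-off is that the paper's route uses only classical ergodic theory together with the already-proved classification of ergodic measures, whereas yours imports a much deeper external theorem but needs nothing about invariant measures beyond the absence of DC$3$-pairs. Your sketched ``direct'' block-counting argument via Lemma~\ref{lem:one} and Lemma~\ref{suma} is plausible but admittedly incomplete; since you present it only as a secondary remark, this is not a gap in your main proof.
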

\begin{proof}
Note that all ergodic measures of $X$ are concentrated on periodic points, hence their measure-theoretic entropy is equal zero.
By the variational principle the topological entropy is also equal zero.
\end{proof}

Finally, we note the following corollary of Lemma~\ref{lem:all-generic-only-periodic-ergodic}.

\begin{corollary}
There is no ergodic invariant measure with support $X$.
\end{corollary}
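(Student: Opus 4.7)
The plan is to deduce this immediately from Lemma \ref{lem:all-generic-only-periodic-ergodic}, which asserts that every ergodic $\sigma$-invariant measure on $(X,\sigma)$ is supported on a single periodic orbit. The argument amounts to observing that such a support is a finite set and therefore cannot coincide with $X$, so I just need to check that $X$ is not itself a single periodic orbit.

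First I would recall that the support of a measure concentrated on the orbit of a periodic point $p$ with prime period $q$ consists of exactly the $q$ points $p,\sigma(p),\ldots,\sigma^{q-1}(p)$, hence it is a finite (in particular closed and proper) subset of $X$, provided $X$ is infinite. Next I would verify that $X$ is indeed infinite: the construction produces for each $n\ge 1$ auxiliary periodic points of the form $(u0^{k})^\infty$ with $k\ge 2^{2n}$ and $u\in\G_{2^n}(X_n)$, so $X$ contains periodic points of arbitrarily large prime period; alternatively, we already know from the previous lemmas that $X$ is topologically mixing (hence perfect) and that it contains non-periodic points generic for the Dirac mass at $0^\infty$.

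Combining these two observations, suppose for contradiction that $\mu$ is an ergodic $\sigma$-invariant measure with $\mathrm{supp}(\mu)=X$. By Lemma \ref{lem:all-generic-only-periodic-ergodic} we have $\mathrm{supp}(\mu)=\{p,\sigma(p),\ldots,\sigma^{q-1}(p)\}$ for some periodic point $p\in X$ of prime period $q$, which is finite. This contradicts the infiniteness of $X$ established above, completing the proof.

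There is essentially no obstacle here; the statement is a clean corollary and the only thing one has to be careful about is citing the right structural fact (every ergodic measure lives on a single periodic orbit, not merely on the set of all periodic points) together with infiniteness of $X$.
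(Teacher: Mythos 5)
Your proposal is correct and matches the paper's intent: the paper states this corollary without proof as an immediate consequence of Lemma \ref{lem:all-generic-only-periodic-ergodic}, and your argument (support of an ergodic measure is a single finite periodic orbit, while $X$ is infinite) is exactly the implicit one. Your extra care in verifying that $X$ is infinite is a reasonable bit of diligence but raises no new issues.
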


\begin{lemma}
The shift space $X$ is not $\omega^*$-chaotic.
\end{lemma}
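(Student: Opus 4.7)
The plan is to prove something strictly stronger than needed: the shift space $X$ contains no infinite minimal subset at all. This immediately precludes the key condition in the definition of $\omega^*$-chaos, since it requires $\omega_\sigma(x) \setminus \omega_\sigma(y) \subset X$ to contain an infinite minimal set for (uncountably many) distinct pairs.

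Suppose toward contradiction that $M \subset X$ is an infinite $\sigma$-minimal set. First, $M$ cannot contain any $\sigma$-periodic point: the orbit of a periodic point in $M$ would form a finite, proper, closed, $\sigma$-invariant subset of $M$, contradicting minimality. So every $y \in M$ is non-periodic, and Lemma \ref{lem:periodic-or-zero1} gives that $\chi_1(y)$ has asymptotic density zero; by Lemma \ref{lem:all-generic-only-periodic-ergodic} each such $y$ is generic for the Dirac measure $\delta_{0^\infty}$.

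Next, fix any $y \in M$. Since $y$ is generic for $\delta_{0^\infty}$, for each $k \in \N$ the asymptotic density of $\{j \ge 0 : \sigma^j(y) \in [0^k]_X\}$ equals $\delta_{0^\infty}([0^k]_X) = 1$. In particular there exist times $j_k$ with $\sigma^{j_k}(y) \in [0^k]_X$, and $\sigma^{j_k}(y) \to 0^\infty$. Because $M$ is closed and $\sigma$-invariant, it contains the closure of the forward orbit of $y$, so $0^\infty \in M$. But then $\{0^\infty\}$ is a proper (since $M$ is infinite) closed $\sigma$-invariant subset of $M$, contradicting minimality. Hence no infinite minimal set exists in $X$, and in particular $(X,\sigma)$ is not $\omega^*$-chaotic.

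The argument is essentially immediate given the structural facts already established for $X$, and no serious obstacle arises. The only delicate point worth emphasising is that the genericity of every non-periodic point for $\delta_{0^\infty}$ forces $0^\infty$ into every forward-orbit closure, but this follows at once by applying genericity to the indicator functions of the cylinders $[0^k]_X$.
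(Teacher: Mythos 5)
Your proposal is correct and follows essentially the same route as the paper: the paper's proof simply asserts that the only minimal sets in $X$ are periodic orbits (hence no $\omega$-limit set contains an infinite minimal set), and your argument supplies the justification for that assertion via genericity of non-periodic points for $\delta_{0^\infty}$. All steps check out.
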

\begin{proof}
The only minimal sets in $X$ are periodic orbits. Therefore no $\omega$-limit set contains an infinite minimal set.
\end{proof}

The following lemma is well-known (see \cite[Chap. 4, Problem 41]{KT}).
\begin{lemma}\label{lem:independent-family}
There exists an uncountable set $\Gamma$ and a family $\{B(\gamma)\}_{\gamma\in\Gamma}$ of infinite subsets of $\mathbb{N}$ such that for any $\alpha,\beta\in \Gamma$ with $\alpha\neq \beta$ we have
\begin{enumerate}
\item  $B(\alpha)\cap B(\beta)$ is infinite,
\item  $B(\alpha)\setminus B(\beta)$ is infinite.
\end{enumerate}
\end{lemma}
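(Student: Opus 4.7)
The plan is to exhibit an explicit family indexed by the reals. Fix once and for all an enumeration $\{q_n\}_{n\in\N}$ of $\mathbb{Q}$, take $\Gamma=\R$, and for each $r\in\R$ set
\[
B(r) = \{n \in \N : q_n \notin [r, r+1]\}.
\]
Each $B(r)$ is infinite because its complement in $\N$ corresponds under $n\mapsto q_n$ to the rationals in the bounded interval $[r,r+1]$, while $\mathbb{Q}$ itself is unbounded.

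For the two required properties, I would note that
\[
B(r)\cap B(s) = \{n : q_n \notin [r,r+1]\cup[s,s+1]\},
\]
which is infinite since $[r,r+1]\cup[s,s+1]$ is a bounded subset of $\R$ and $\mathbb{Q}$ meets any unbounded open set in an infinite set. Similarly,
\[
B(r)\setminus B(s) = \{n : q_n \in [s,s+1]\setminus[r,r+1]\}.
\]
For $r\neq s$, two closed intervals of equal length cannot be nested unless they coincide, so $[s,s+1]\setminus[r,r+1]$ contains a nondegenerate open subinterval and hence infinitely many rationals.

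There is no genuine obstacle here; the only point worth flagging is that a naive choice such as $B(r)=\{n:q_n<r\}$ fails property~(2), since the resulting family is linearly ordered by inclusion, forcing $B(r)\setminus B(s)=\emptyset$ whenever $r\le s$. Replacing half-lines by complements of unit intervals breaks the monotonicity just enough so that both the intersection and the difference are automatically infinite, while the parametrization by a real number keeps the family uncountable.
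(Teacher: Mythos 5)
Your construction is correct. The paper itself gives no proof of this lemma, citing only Komj\'{a}th--Totik (Chap.~4, Problem~41), so your argument actually supplies what the paper omits. All the steps check out: $B(r)\cap B(s)$ is the preimage under $n\mapsto q_n$ of the complement of the bounded set $[r,r+1]\cup[s,s+1]$, hence infinite; and for $r\neq s$ the set $[s,s+1]\setminus[r,r+1]$ is either all of $[s,s+1]$ or one of the half-open intervals $(r+1,s+1]$ or $[s,r)$, each of positive length, so it contains infinitely many rationals and $B(r)\setminus B(s)$ is infinite (which in particular makes the indexing injective, so the family is genuinely uncountable). Your closing remark correctly diagnoses why the naive half-line choice $\{n:q_n<r\}$ fails. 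One could state the single small gap more carefully: non-nestedness of the two unit intervals gives only nonemptiness of the set difference a priori, and you should add the one-line observation that a nonempty difference of two closed intervals is a union of intervals of positive length (as in the case analysis above) before invoking density of $\mathbb{Q}$; with that sentence the proof is complete.
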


We now proceed to our last result.

\begin{lemma}
The shift space $X$ is $\omega$-chaotic.
\end{lemma}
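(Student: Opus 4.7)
The plan is to apply Lemma \ref{lem:independent-family} to obtain an uncountable family $\{B(\gamma)\}_{\gamma\in\Gamma}$ of infinite subsets of $\N$ and then, for each $\gamma\in\Gamma$, to construct a point $x(\gamma)\in X$ whose $\omega$-limit set contains an uncountable ``marker subshift'' $Z_n\subset X$ for every $n\in B(\gamma)$ but avoids every non-trivial point of $Z_n$ for every $n\notin B(\gamma)$. The set $S=\{x(\gamma):\gamma\in\Gamma\}$ will then be the desired $\omega$-chaotic scrambled set.

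To make the $Z_n$'s mutually distinguishable, I fix an injective $\phi\colon\N\to\N$ with $\phi(n)\geq 4$ and define the marker words $u_n=10^{\phi(n)}1$; each $u_n$ is a from-$1$-to-$1$ word lying in $\G_2(X_1)$. Let $T=\{k\geq 4:k\neq\phi(m)\text{ for all }m\in\N\}$; this set is infinite because the image of $\phi$ is sparse. Define $Z_n\subset X$ as the orbit closure of all sequences of the form $0^{s_0}u_n0^{s_1}u_n0^{s_2}\cdots$ with $s_0\geq 0$ and $s_i\in T\cup\{\phi(n)\}$ for $i\geq 1$; topological mixing of $X$, together with the fact that $u_n\in\G_2(X_1)$ and every allowed gap is at least $2^{2}=4$, guarantees that every such sequence belongs to $X$. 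The set $Z_n$ is uncountable since distinct gap-sequences $(s_i)$ produce distinct points, and by construction the distance between any two consecutive $1$'s in a point of $Z_n$ is either $\phi(n)$ or an element of $T$; hence any non-zero $y\in Z_n$ contains $u_n$ as a subblock but no $u_m$ with $m\neq n$.

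Given $\gamma\in\Gamma$, I enumerate $\Bl(Z_n)=\{v^{(n)}_k\}_{k=1}^\infty$ for each $n\in B(\gamma)$ and pick a sequence of pairs $(n_j,k_j)_{j=1}^\infty$ with $n_j\in B(\gamma)$ and $k_j\in\N$ such that every pair $(n,k)$ with $n\in B(\gamma)$ appears infinitely often. Choose $R_j\geq 4$ inductively so that $v^{(n_j)}_{k_j}0^{R_j}v^{(n_{j+1})}_{k_{j+1}}\in\Bl(X)$ (using topological mixing) and so that the total number of zeros between consecutive $1$'s across the junction is not of the form $\phi(m)$ for any $m\notin B(\gamma)$; such $R_j$ exist because the set of excluded values is sparse in $\N$. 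Set $x(\gamma)=v^{(n_1)}_{k_1}0^{R_1}v^{(n_2)}_{k_2}0^{R_2}\cdots\in X$. Then every block in $\Bl(Z_n)$ for $n\in B(\gamma)$ appears infinitely often in $x(\gamma)$, hence $Z_n\subset\omega_\sigma(x(\gamma))$; conversely, for $n\notin B(\gamma)$ the block $u_n$ does not appear in $x(\gamma)$ at all (neither inside any inserted $v^{(n_j)}_{k_j}$, by construction of $Z_{n_j}$, nor across any junction, by the choice of $R_j$), so $\omega_\sigma(x(\gamma))\cap Z_n\subset\{0^\infty\}$. Consequently, for $n\in B(\gamma)\setminus B(\beta)$ the uncountable set $Z_n\setminus\{0^\infty\}$ is contained in $\omega_\sigma(x(\gamma))\setminus\omega_\sigma(x(\beta))$; for $n\in B(\gamma)\cap B(\beta)$ we have $Z_n\subset\omega_\sigma(x(\gamma))\cap\omega_\sigma(x(\beta))\neq\emptyset$; and $\omega_\sigma(x(\gamma))\supset Z_n$ is uncountable, so it contains non-periodic points. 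The principal obstacle is the combinatorial bookkeeping needed to guarantee that no spurious $u_m$ with $m\notin B(\gamma)$ ever appears in $x(\gamma)$; the sparseness of $\phi(\N)$ and the restricted gap set $T$ are engineered precisely to leave enough slack in the inductive selection of the $R_j$'s.
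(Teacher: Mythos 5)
There is a genuine gap, and it is fatal to the construction as written: the sets $Z_n$ are not contained in $X$. The language of $X$ has a rigid hierarchical structure: by Lemma~\ref{lem:one}, any block $u\in\Bl(X)$ added at step $n$ satisfies $||u||_1>2^{n-1}$ and must contain a run of at least $2^{2n-2}$ consecutive zeros; equivalently, a block with roughly $2k$ ones must contain a gap of zeros of length on the order of $k^2$. Your $Z_n$ is built by concatenating copies of $u_n$ with gaps drawn from $T\cup\{\phi(n)\}$, a set that is only bounded below by $4$ and need not grow along a given sequence; a prefix containing $k$ copies of $u_n$ with, say, all gaps equal to $\phi(n)$ has $2k$ ones but no long run of zeros, so for large $k$ it is not in $\Bl(X)$. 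The appeal to topological mixing does not repair this: mixing says that for given blocks $u,v$ there is an $N(u,v)$ beyond which all transition lengths work, but in this space $N(u,v)$ genuinely grows with $||u||_1+||v||_1$ (one needs $\beta\ge 2^{2m}$ to join two blocks each with at most $2^m$ ones), so it cannot be taken uniformly equal to $4$. The same error recurs in the definition of $x(\gamma)$: verifying that each consecutive junction $v^{(n_j)}_{k_j}0^{R_j}v^{(n_{j+1})}_{k_{j+1}}$ lies in $\Bl(X)$ does not imply that longer prefixes of the infinite concatenation lie in $\Bl(X)$; there the problem is repairable by letting $R_j$ grow with the total number of ones accumulated so far, but for $Z_n$ it is not repairable without redefining $Z_n$ to have growing internal gaps, which changes the object and reopens the question of whether $Z_n\subset\omega_\sigma(x(\gamma))$. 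A secondary, more cosmetic issue: $Z_n\setminus\{0^\infty\}$ contains points such as $0^a10^\infty$ that contain no copy of $u_n$, so the claimed inclusion $Z_n\setminus\{0^\infty\}\subset\omega_\sigma(x(\gamma))\setminus\omega_\sigma(x(\beta))$ is too strong as stated (though uncountability of the difference would survive if the main construction worked).

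For comparison, the paper sidesteps exactly this obstruction: instead of marker words with bounded gaps, it builds for each infinite $S\subset\N$ a subsystem $X_S\subset X$ by repeating the hierarchical construction of $X$ itself, keeping the level-$n$ gap requirement $\beta\ge 2^{2n}$ and additionally demanding $\beta\in S$. Membership in $X$ is then automatic, the almost-disjoint family $\{B(\gamma)\}$ from Lemma~\ref{lem:independent-family} distinguishes the subsystems because a block $10^\beta1$ of $X_S\cap X_T$ forces $\beta\in S\cap T$, and transitive points of the $X_{B(\gamma)}$ form the $\omega$-chaotic set. Your marker idea could in principle be grafted onto such a hierarchically-gapped construction, but as it stands the proof does not go through.
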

\begin{proof}
We first define for any infinite set $S\subset\N$ an infinite transitive subsystem $X_S\subset X$. 
To do this we proceed as with the construction of $X$. We start with $X^S_0=X_0$.
During the induction step, we define the set of joining auxiliary points  by
$J^S_{n+1}=\dot{J}^S_{n+1}\cup \ddot{J}^S_{n+1}$, where
\begin{align*}
\dot{J}^S_{n+1}&=\{0^\alpha u0^\infty:\alpha\ge 0,\,u\in\G_{2^n}(X^S_n)\},\\
\ddot{J}^S_{n+1}&=\{0^\alpha u0^{\beta}v0^\infty:\alpha\ge 0,\,\beta\ge 2^{2n},\,\beta \in S,\,u,v\in\G_{2^n}(X^S_n)\}.
\end{align*}
Note that $J^S_{n+1}\subset J_{n+1}$ and $J^S_{n+1}$ nonempty because $S$ is infinite. The main difference from the previous construction is the condition $\beta\in S$.
We do not add any periodic points. As above we set $X_{n+1}^S=X_n^S\cup \overline{J^S_{n+1}}$. Then
\[
X_S=\overline{\bigcup_{n=0}^\infty X^S_n}.
\]

We inductively define the family of \emph{allowed} blocks. We say that every block $w\in\{0,1\}^*$ with $||w||_1\le 1$ is allowed. 
Assume that for some $n\in\N$ we have defined which blocks $w$ over $\{0,1\}$ with at most $2^{n-1}$ ones are allowed. We say that a block $w\in\{0,1\}^*$ with $2^{n-1}<||w||_1\le 2^n$ is allowed if there are allowed from-$1$-to-$1$ blocks $u_1$ and $u_2$ with $1\le ||u_j||_1\le 2^{n-1}$ for $j=1,2$, $a,b\ge 0$, and $\beta\in S$ such that $\beta\ge 2^{2n}$ and $w=0^au_10^\beta u_20^b$.
A straightforward (but tedious) analysis of the definition of $X_S$ shows that block $w$ belongs to the language $\Bl(X_S)$ of $X_S$ if and only if it is an allowed block. This finishes the construction of $X_S$. 

We claim that $X_S$ is transitive. To this and take two allowed blocks $u$ and $v$. Without loss of generality we may assume that $u$ and $v$ are from-$1$-to-$1$ blocks. Let $i=||u||_1$ and $j=||v||_1$. Pick $N$ such that
$\max\{i,j\}\le 2^N$. Let $\beta\in S$ be such that $\beta\ge 2^{2N}$. Then $u0^\beta v$ is an allowed word, which proves the transitivity of $X_S$.
Again, we used the fact that $S$ is infinite. It follows that $X_S$ is a topologically transitive and infinite shift space. Therefore $X_S$ is uncountable.

Note that if $S$ and $T$ are infinite subsets of $\N$ such that $S\cap T$ is also infinite, then $X_{S\cap T}\subset X_S\cap X_T$ (here one can write an equality, but this inclusion suffices for our purposes). It is also clear that if $S\subset T\subset \N$ is infinite, then $X_S\subset X_T$. Furthermore, if $S,T\subset \N$ are infinite and disjoint, then $X_S\cap X_{T}=X_0$.
To see this, observe that $X_0$ is contained in $X_S$ for every infinite $S$. On the other hand, if $x\in (X_S\cap X_{T})\setminus X_0$, then we can find $w=10^\beta 1\in \Bl(X_S)\cap\Bl(X_T)$, which is impossible since then $\beta\in S\cap T=\emptyset$. The same argument shows that if $S\neq T$, then $X_S\neq X_T$.

Recall that $X_0$ is a countable shift space.
In particular, if $S,T\subset \N$ are infinite and disjoint, then $X_S\setminus X_{T}=X_S\setminus X_0$ is uncountable.

Let $\{B(\gamma)\}_{\gamma\in\Gamma}$ be a family provided by Lemma \ref{lem:independent-family}. For any $\alpha,\beta\in \Gamma$ with $\alpha\neq \beta$ we define $C(\alpha,\beta)=B(\alpha)\cap B(\beta)$ and $D(\alpha,\beta)=B(\alpha)\setminus B(\beta)$.

Let $X_\gamma$ be the shift space $X_S$ constructed for $S=B(\gamma)$, where $\gamma\in \Gamma$.
Using transitivity, for any $\gamma\in\Gamma$ we pick a point $x_\gamma\in X_\gamma$ such that its omega-limit set, $\omega(x_\gamma)=X_{\gamma}$.
We claim that $\{x_\gamma\}_{\gamma\in\Gamma}$ is an $\omega$-chaotic set. Clearly, $\{x_\gamma\}_{\gamma\in\Gamma}$ is uncountable, because $X_{\gamma}\neq X_{\gamma'}$ if $\gamma\neq\gamma'$. Furthermore, $B(\gamma)$ is infinite, therefore $\omega(x_\gamma)$ is uncountable for each $\gamma\in\Gamma$ and is not contained in the set of periodic points. Now note that for every $\alpha,\beta\in\Gamma$ with $\alpha\neq\beta$ we have
$\omega(x_\alpha)\cap \omega(x_\beta)\supset X_{C(\alpha,\beta)}$, which is uncountable because $C(\alpha,\beta)$ is infinite and $\omega(x_\alpha)\setminus \omega(x_\beta)$ is also uncountable (it contains all transitive points $X_{D(\alpha,\beta)}$, because $D(\alpha,\beta)$ is infinite and disjoint with $B(\beta)$).
Hence $\{x_\gamma:\gamma\in\Gamma\}$ is an $\omega$-chaotic set, which proves that $X$ is $\omega$-chaotic.
\end{proof}

\section*
{Acknowledgment.} We would like to thank the referee for the thorough, constructive and helpful comments and suggestions on the manuscript. We are grateful to: Jakub Byszewski, Vaughn Climenhaga, Jakub Konieczny, Marcin Lara, Martha {\L}{\c{a}}cka, Dariusz Matlak, Ville Sallo and Maciej Ulas for their remarks and suggestions on the present and previous version of this paper. We wish to express our thanks to Benjamin Weiss for several helpful comments concerning his work and our construction in Section 6.
The research of Dominik Kwietniak was supported by the  National Science Centre (NCN) under grant no. DEC-2012/07/E/ST1/00185.
The research of Jian Li  was supported by NSF of Guangdong province (S2013040014084) and Scientific Research Fund of Shantou University (YR13001).


\end{document}